\documentclass[a4,12pt]{article}
\usepackage{graphicx,fancyhdr,mathrsfs,bbm}
\usepackage{amsfonts}
\usepackage{amsmath}
\usepackage{mathtools}
\usepackage{multicol}
\usepackage{mathrsfs}
\usepackage{multirow}
\usepackage{amssymb}
\usepackage{url}
\usepackage[colorlinks=true,citecolor=blue,urlcolor=blue]{hyperref}
\usepackage{fancyhdr}
\usepackage{indentfirst}
\usepackage{amsthm}
\usepackage{color}
\usepackage{comment}
\usepackage{dsfont}
\usepackage{natbib}
\usepackage[misc]{ifsym}
\usepackage[toc,page]{appendix}
\usepackage{subfig}
\usepackage[top=20mm, bottom=30mm, left=20mm, right=20mm]{geometry}
\usepackage{enumitem}
\setlist{
  align=left,
  labelindent=0mm,
  leftmargin=!,
  itemindent=0mm,
  listparindent=\parindent,
  parsep=0mm,
  topsep=1mm,
  itemsep=1mm
}
\setlist[itemize,1]{label={\mysquare}, labelwidth=\widthof{\mysquare\ }}
\setlist[itemize,2]{label={\mytriangle}, labelwidth=\widthof{\mytriangle\ }}
\setlist[itemize,3]{label={\mybar}, labelwidth=\widthof{\mybar\ }}
\setlist[itemize,4]{label={\mydot}, labelwidth=\widthof{\mydot\ }}
\setlist[enumerate,1]{label=(\arabic*), labelwidth=\widthof{(9)}}
\setlist[enumerate,2]{label=(\arabic{enumi}.\arabic*), labelwidth=\widthof{(9.9)}}
\setlist[enumerate,3]{label=(\arabic{enumi}.\arabic{enumii}.\arabic*), labelwidth=\widthof{(9.9.9)}}
\setlist[enumerate,4]{label=(\arabic{enumi}.\arabic{enumii}.\arabic{enumiii}.\arabic*), labelwidth=\widthof{(9.9.9.9)}}
\usepackage{actuarialsymbol}
\usepackage{actuarialangle}

\newcommand{\R}{\mathbb{R}}

\newcommand{\N}{\mathbb{N}}

\newcommand{\bx}{\bm{x}}

\newcommand{\bX}{\bm{X}}

\newcommand{\bone}{\bm{1}}

\renewcommand{\ge}{\geqslant}
\renewcommand{\le}{\leqslant}

\renewcommand{\epsilon}{\varepsilon}

\usepackage{array}
\newcommand{\PreserveBackslash}[1]{\let\temp=\\#1\let\\=\temp}
\newcolumntype{C}[1]{>{\PreserveBackslash\centering}p{#1}}
\newcolumntype{R}[1]{>{\PreserveBackslash\raggedleft}p{#1}}
\newcolumntype{L}[1]{>{\PreserveBackslash\raggedright}p{#1}}

\usepackage{csquotes}
\renewcommand{\mkbegdispquote}[2]{\itshape}

\usepackage{booktabs,array}

\newcount\rowc

\makeatletter
\def\ttabular{%
  \hbox\bgroup
  \let\\\cr
  \def\rulea{\ifnum\rowc=\@ne \hrule height 1.3pt \fi}
  \def\ruleb{
    \ifnum\rowc=1\hrule height 1.3pt \else
      \ifnum\rowc=6\hrule height \heavyrulewidth
      \else \hrule height \lightrulewidth\fi\fi}
  \valign\bgroup
  \global\rowc\@ne
  \rulea
  \hbox to 10em{\strut \hfill##\hfill}%
  \ruleb
  &&%
  \global\advance\rowc\@ne
  \hbox to 10em{\strut\hfill##\hfill}%
  \ruleb
  \cr}
\def\endttabular{%
  \crcr\egroup\egroup}

\theoremstyle{definition}
\newtheorem{Theorem}{Theorem}
\newtheorem{Corollary}{Corollary}

\newtheorem{Proposition}{Proposition}
\newtheorem{Definition}{Definition}
\newtheorem{Example}{Example}

\usepackage{tikz}

\usepackage[compact]{titlesec}

\usepackage{color,xcolor}
\usepackage{bm}

\newcommand{\two}{\mathrm{I}\hspace{-1.2pt}\mathrm{I}}
\newcommand{\one}{\mathrm{I}}

\newcommand*{\IN}{\mathbb{N}}

\newcommand*{\U}{\operatorname{U}}

\newcommand*{\psii}{{\psi^{-1}}}

\usepackage{algpseudocode}
\usepackage{algorithm,algcompatible} 

\algnewcommand\algorithmicinput{\textbf{Input:}}
\algnewcommand\INPUT{\item[\algorithmicinput]}

\algnewcommand\algorithmicoutput{\textbf{Output:}}
\algnewcommand\OUTPUT{\item[\algorithmicoutput]}

\makeatletter
\DeclareRobustCommand{\bsquare}{%
  \mathop{\vphantom{\sum}\mathpalette\bigstar@\relax}\slimits@
}

\newcommand{\bigstar@}[2]{%
  \vcenter{%
    \sbox\z@{$#1\sum$}%
    \hbox{\resizebox{.9\dimexpr\ht\z@+\dp\z@}{!}{$\m@th\dsquare$}}%
  }%
}
\makeatother

\usepackage[onehalfspacing]{setspace}

\begin{document}

\title{
Measuring multivariate maximal tail dependence
}
\author{
  Takaaki Koike\thanks{\protect\linespread{1}\protect\selectfont Corresponding author.}\,\,\thanks{\protect\linespread{1}\protect\selectfont
    Graduate School of Economics, Hitotsubashi University, 2-1, Naka, Kunitachi, Tokyo 186-8601, Japan.
    Email: \texttt{takaaki.koike@r.hit-u.ac.jp}},
  Marius Hofert\thanks{\protect\linespread{1}\protect\selectfont
    Department of Statistics and Actuarial Science, The University of Hong Kong, Pok Fu Lam, Hong Kong.
     Email: \texttt{mhofert@hku.hk}}\, and
  Haruki Tsunekawa\thanks{\protect\linespread{1}\protect\selectfont
    Graduate School of Economics, Hitotsubashi University, 2-1, Naka, Kunitachi, Tokyo 186-8601, Japan.
    Email: \texttt{em265009@g.hit-u.ac.jp}}
}

\maketitle

\begin{abstract}
  The classical tail dependence coefficient (TDC) may fail to capture
  non-exchangeable features of bivariate tail dependence since it evaluates the
  underlying copula only along the diagonal. To address this limitation, several
  measures of strongest manifestation of tail dependence have been proposed in
  the bivariate case, including a measure based on the tail copula of the
  underlying bivariate copula. This paper introduces and investigates the
  multivariate maximal tail concordance measure (MTCM) which extends the
  bivariate measure to the multivariate case. The MTCM quantifies the largest
  tail mass over lower hyperrectangles of common unit volume, while the
  associated maximizer identifies the direction of maximal tail probability. We
  establish fundamental properties of the MTCM in the multivariate case,
  including existence of an optimal direction.  We also derive analytical
  representations for several important model classes. Closed-form expressions
  are further obtained for survival Marshall–Olkin copulas, Archimax and nested
  Archimedean copulas with regularly varying Archimedean generators. An
  application to trivariate annual sea-level maxima in England shows that the
  MTCM can reveal off-diagonal stress directions and substantial differences in
  the underlying extremal dependence not detected by likelihood- or TDC-based
  comparisons.
\end{abstract}

\noindent \emph{MSC classification:}
60E05, 
62G32, 
62H10, 
62H20. 
\\
\noindent \emph{Keywords:}
Archimax copula;
copula;
extreme value copula;
nested Archimedean copula;
tail copula;
tail non-exchangeability.

\section{Introduction}
Copulas, that is distribution functions with $\U(0,1)$ margins, are a standard
tool for modeling stochastic dependence, in particular joint extremal dependence
in applications in finance, insurance and risk management;
see~\citet{nelsen2006introduction}, \citet{jaworskidurantehaerdlerychlik2010} or
\citet[Chapter~7]{mcneil2015quantitative}. By considering suitable reflections,
we can, without loss of generality, focus on the lower tail (around the origin)
in this work. In the bivariate case, the classical (lower) \emph{tail dependence
  coefficient (TDC)} of \citet{sibuya1960bivariate} is
\begin{align*}
  \lambda(C)=\lim_{u\downarrow 0}\frac{\mathbb{P}(U_1\le u,U_2\le u)}{u}=\lim_{u\downarrow 0}\frac{C(u,u)}{u},\quad (U_1,U_2)\sim C.
\end{align*}
The TDC is widely used to summarize the strength of bivariate tail
dependence. However, since only the diagonal of $C$ enters the definition of
$\lambda(C)$, this notion may miss non-exchangeable or off-diagonal features of
tail dependence.

Several approaches have been proposed to describe or quantify more detailed tail
behavior, such as the tail copula~\citep{schmidt2006non}, tail dependence
function~\citep{joe2010tail} or the tail order function~\citep{hua2011tail} on
the functional side; for scalar summaries, see
\citet{krupskii2015tail}, \citet{lee2018tail}, \citet{hua2019assessing} and
\citet{siburg2024comparing}. In the bivariate case,~\citet{koike2023measuring} introduced the \emph{maximal tail
  concordance measure} (MTCM)
\begin{align*}
  \lambda^\ast(C)=\sup_{b\in(0,\infty)}\Lambda\left(b,\frac{1}{b};C\right),
\end{align*}
where
\begin{align*}
  \Lambda(x_1,x_2;C)=\lim_{t\downarrow 0}\frac{C(tx_1,tx_2)}{t}, \quad (x_1,x_2)\in(0,\infty)^2,
\end{align*}
denotes the tail copula of $C$. The MTCM $\lambda^\ast(C)$ is also relevant to
the notion of path-based tail dependence proposed by \citet{furman2015paths};
see~\citet{koike2026tail} for their relationship.

The aim of this paper is to develop an extension of the MTCM to the multivariate
case. The main idea is to maximize the tail copula over all hyperrectangles
anchored at the origin and of unit volume. Hence the MTCM is not only a scalar
measure of extremal dependence, but its maximizer also identifies an off-diagonal
stress direction along which tail dependence is strongest.

This proposal fits into a broader literature on multivariate tail summaries.
Several multivariate extensions of the bivariate TDC summarize extremal
dependence, such as~\citet{frahm2006extremal} and
\citet{schmidschmidt2007b}. Within the extreme value framework, the stable tail
dependence function provides the fundamental asymptotic object, and the extremal
coefficient~\citep{schlather2002inequalities,schlather2003dependence} and
orthant tail dependence parameters~\citep{li2009orthant} provide informative
scalar summaries.  More recently, direction-sensitive approaches have been
developed via multivariate directional tail-weighted dependence
measures~\citep{li2024multivariate} and tail dependence
orderings~\citep{siburg2024multivariate}.  Properties of some multivariate tail
dependence measures have been studied in~\citet{fernandez2016independence}
and~\citet{gijbels2020multivariate}.  The multivariate MTCM complements these
approaches by focusing on maximal tail mass over comparable hyperrectangles.

The paper is organized as follows. In Section~\ref{sec:multivariate:MTCM}, we
introduce the multivariate notion of MTCM and establish its basic properties. In
particular, we show that the appearing supremum is always attained, so the
measure is well defined as a maximum. Sections~\ref{sec:mtcm:ev}
and~\ref{sec:archimax:hac} derive an analytical representation for various
copula families. In Section~\ref{sec:mtcm:ev}, we first find a representation
for the survival copula $\hat C$ of a copula $C$ in the maximum domain of
attraction of an extreme value copula. This leads to a closed-form expression
for the MTCM and its maximizer for survival Marshall--Olkin (MO) copulas, which
naturally arise from common-shock constructions; see
\citet{marshall1967multivariate} and \citet{lindskog2003common} for applications
and~\citet{li2008tail} for tail dependence properties. In
Section~\ref{sec:archimax:hac}, we then consider
Archimax~\citep{mesiar2013d,caperaa2000bivariate,charpentierfougeresgenestneslehova2014}
and nested Archimedean
copulas~\citep{joe1993parametric,joe1997multivariate,mcneil2008,hofert2010c,hofert2011a}
with regularly varying generators. In Section~\ref{sec:numerical:real:data}, we
illustrate the practical usefulness of the multivariate MTCM in an application
to trivariate annual sea-level maxima in England studied by
\citet{smith1990statistics} and \citet{tawn1990modelling}. Re-examining the
fitted extreme value models through the lens of MTCMs, we reveal differences in
maximal tail dependence and off-diagonal stress directions that are not apparent
from likelihood-based comparisons or the standard TDC alone. This is
particularly relevant in flood-risk applications, where portfolio losses may be
driven by off-diagonal joint extremes. Section~\ref{sec:concl} ends the main
part of the paper with a conclusion. All proofs are deferred to
Appendix~\ref{sec:proofs}.

\section{A measure of multivariate maximal tail dependence}\label{sec:multivariate:MTCM}

In this section, we introduce a multivariate extension of the MTCM and study its basic properties.

Let $C$ be a $d$-dimensional copula. If the limit
\begin{align*}
  \Lambda(\bx;C):=\lim_{t \downarrow 0} \frac{C(t\bx)}{t},
\end{align*}
exists for every $\bx\in(0,\infty)^d$, then
$\Lambda:(0,\infty)^d\to[0,\infty)$ is called a \emph{(lower) tail copula} of
$C$.  The tail copula $\Lambda$ is \emph{non-degenerate} if
$\Lambda \not \equiv 0$, otherwise it is \emph{degenerate}.
Then our proposed multivariate extension of the MTCM is defined as follows.

\begin{Definition}[Multivariate extension of the MTCM]
  Let $\mathcal B = \{\bm{b}\in(0,\infty)^d:\ \prod_{j=1}^d b_j = 1\}$
  and $C$ be a $d$-dimensional copula with tail copula $\Lambda$.  The (multivariate)
  \emph{maximal tail concordance measure (MTCM)} of $C$ is defined as
  \begin{align}\label{eq:def:mtcm:multivariate}
    \lambda^\ast(C) = \sup_{\bm{b}\in \mathcal B} \Lambda(\bm{b};C).
  \end{align}
  The unique maximizer of $\Lambda(\bm{b};C)$,
  if it exists, is denoted by $\bm{b}^\ast=\bm{b}^\ast(C)\in \mathcal B$.
\end{Definition}
Let $I_{\bm b}=\prod_{j=1}^d[0,b_j]$, $\bm{b}\in[0,\infty)^d$, and
$\bm{U}\sim C$.  Similar to the bivariate case, the MTCM $\lambda^\ast(C)$
quantifies the maximal possible tail probability
$\lim_{t \downarrow 0}\mathbb{P}(\bm{U}/t \in I_{\bm{b}})/t$ over all
hyperrectangles $I_{\bm{b}}$ with unit volume and lower endpoint
anchored at $\bm{0}_d=(0,\dots,0)\in\R^d$.

The following proposition summarizes basic properties of
MTCMs.
To this end, let $M(\bm{u})=\min\{u_1,\dots,u_d\}$, $\bm{u}\in [0,1]^d$, denote
the comonotone copula and write $\overline \Lambda(\cdot)=\Lambda(\cdot;M)$
for the corresponding tail copula.

\begin{Proposition}[Basic properties of the MTCM]\label{prop:basic:properties:extension}
  Let $C$, $C_1$, $C_2$ be $d$-dimensional copulas admitting tail copulas $\Lambda$, $\Lambda_1$, $\Lambda_2$, respectively.
  \begin{enumerate}[label=(\roman*), labelwidth=\widthof{(iii)}]
  \item\label{item:attainability:mv} The supremum in~\eqref{eq:def:mtcm:multivariate} is attained; hence $\lambda^\ast(C)=\max_{\bm b\in\mathcal B}\Lambda(\bm b;C)$.
  \item\label{item:one:mv} $\lambda^\ast(C)=1$ if and only if $\Lambda(\cdot;C)=\overline \Lambda(\cdot)$.
  \item\label{item:zero:mv} $\lambda^\ast(C)=0$ if and only if $\Lambda(\cdot;C)\equiv 0$.
  \item\label{item:monotonocity:mv} $\lambda^\ast(C_1)\le \lambda^\ast(C_2)$ if $\Lambda_1(\bx;C_1) \le \Lambda_2(\bx;C_2)$ for all $\bx\in(0,\infty)^d$.
  \item\label{item:convexity:mv} $\lambda^\ast(tC_1 + (1-t)C_2)\le t \lambda^\ast(C_1) + (1-t)\lambda^\ast(C_2)$ for all $t \in [0,1]$.
  \item\label{item:continuity:mv} Let $(C_n)_{n \in \N}$ be a sequence of copulas.
    If $\Lambda(\cdot;C_n)\to \Lambda(\cdot;C)$ pointwise, then $\lambda^\ast(C_n)\to \lambda^\ast(C)$.
  \end{enumerate}
\end{Proposition}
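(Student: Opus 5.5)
We first collect the standard properties of a tail copula $\Lambda$ that the argument relies on. Each follows directly from the corresponding property of $C$ by passing to the limit $t\downarrow 0$ in $C(t\bx)/t$:
\begin{itemize}
\item $\Lambda$ is nondecreasing in each argument;
\item $\Lambda$ is positively homogeneous of order one, $\Lambda(s\bx)=s\Lambda(\bx)$ for $s>0$;
\item $\Lambda$ is Lipschitz in the sense $|\Lambda(\bx)-\Lambda(\bm y)|\le\sum_j|x_j-y_j|$, by the Lipschitz property of copulas;
\item $\Lambda(\bx)\le \min_j x_j=\overline\Lambda(\bx)$, by the Fréchet--Hoeffding upper bound.
\end{itemize}

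For \ref{item:attainability:mv}, if $\Lambda\equiv0$ every $\bm b$ is a maximizer. Otherwise set $s:=\sup_{\mathcal B}\Lambda>0$; positivity holds because $\Lambda(\bx)>0$ for some $\bx$, and homogeneity gives $\Lambda(\bx/(\prod x_j)^{1/d})>0$. The point is that maximizing sequences stay in a compact subset of $\mathcal B$. Indeed, $\Lambda(\bm b)\le\min_j b_j$, so any $\bm b$ with $\Lambda(\bm b)\ge s/2$ satisfies $b_j\ge s/2$ for all $j$. Combined with $\prod b_j=1$, this gives $b_j\le (s/2)^{-(d-1)}$. So the set $\{\bm b\in\mathcal B:\Lambda(\bm b)\ge s/2\}$ lies in a compact subset of $\mathcal B$ bounded away from the boundary, and continuity of $\Lambda$ yields a maximizer. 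This compactness step is the only mildly delicate point.

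For \ref{item:one:mv}, the direction "$\Leftarrow$" is immediate: $\overline\Lambda(\bm b)=\min_j b_j\le 1$ on $\mathcal B$, with equality exactly at $\bm b=\bone$. For "$\Rightarrow$", suppose $\lambda^\ast(C)=1$ with maximizer $\bm b$. Then $1=\Lambda(\bm b)\le\min_j b_j$, and together with $\prod_j b_j=1$ this forces $\bm b=\bone$, so $\Lambda(\bone)=1$. To conclude $\Lambda\equiv\overline\Lambda$, we argue probabilistically, using $\Lambda(\bx)=\lim_t\mathbb P(\bm U\le t\bx)/t$. Fix $\bx$ and let $m=\min_j x_j$. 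Homogeneity gives $\Lambda(m\bone)=m$, and monotonicity gives $\Lambda(\bx)\ge\Lambda(m\bone)=m$. The Fréchet bound gives $\Lambda(\bx)\le m$, so $\Lambda(\bx)=m$.

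For \ref{item:zero:mv}, the direction "$\Leftarrow$" is trivial. Conversely, if $\Lambda(\bx)>0$ for some $\bx$, then by homogeneity the rescaled point $\bx/(\prod_j x_j)^{1/d}\in\mathcal B$ has positive value, so $\lambda^\ast(C)>0$.

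Item \ref{item:monotonocity:mv} is immediate by taking suprema. For \ref{item:convexity:mv}, the mixture $tC_1+(1-t)C_2$ has tail copula $t\Lambda_1+(1-t)\Lambda_2$ by linearity of limits, and the supremum of a sum is at most the sum of the suprema.

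For \ref{item:continuity:mv}, the key observation is that tail copulas are uniformly $1$-Lipschitz in the $\ell^1$ norm, so the family $\{\Lambda(\cdot;C_n)\}$ is equicontinuous. Pointwise convergence then upgrades to uniform convergence on compact sets. Choose a compact $K\subset\mathcal B$ containing a maximizer for $C$ and for all $C_n$, as follows.
\begin{itemize}
\item If $\lambda^\ast(C)>0$, pointwise convergence at $\bone$ gives $\lambda^\ast(C_n)\ge\Lambda(\bone;C_n)\ge \lambda^\ast(C)/2$ for large $n$. Alternatively, evaluate at the maximizer $\bm b^\ast$ of $\Lambda(\cdot;C)$: this gives $\liminf_n\lambda^\ast(C_n)\ge\lambda^\ast(C)$, so eventually $\lambda^\ast(C_n)\ge \lambda^\ast(C)/2$. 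The argument from \ref{item:attainability:mv} then confines all relevant maximizers to one compact $K$, uniform in $n$, and uniform convergence on $K$ gives $\lambda^\ast(C_n)\to\lambda^\ast(C)$.
\item If $\lambda^\ast(C)=0$, the bound $\Lambda\le\min_j b_j$ forces $\Lambda(\bm b;C_n)\le\epsilon$ unless $\bm b$ lies in the compact set $\{\bm b\in\mathcal B:\min_j b_j\ge\epsilon\}$. On that set $\Lambda(\cdot;C_n)\to0$ uniformly, so $\limsup_n\lambda^\ast(C_n)\le\epsilon$ for every $\epsilon>0$.
\end{itemize}

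Making this uniform compactness argument precise is the main obstacle.
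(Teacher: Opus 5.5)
Your proposal is correct and follows the paper's proof essentially step for step. It uses the same four properties of $\Lambda$ and confines near-maximizers to the compact set $\{\bm b\in\mathcal B:\min_j b_j\ge c\}$ (you take $c=s/2$, the paper any $c<\Lambda(\tilde{\bm b})$). It also uses the same monotonicity--homogeneity squeeze in (ii) and the same equicontinuity upgrade to uniform convergence on that compact set in (vi).

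One small slip: in (vi), your first claim that $\Lambda(\bone;C_n)\ge\lambda^\ast(C)/2$ eventually is false in general. For the survival Marshall--Olkin copula, $\Lambda(\bone)=\min_j\alpha_j$ can lie far below $\tfrac12\prod_j\alpha_j^{1/d}$. Your alternative of evaluating at the maximizer $\bm b^\ast$ of $\Lambda(\cdot;C)$ is correct, and it is essentially the paper's use of a point $\tilde{\bm b}$ with $\Lambda(\tilde{\bm b})>0$.
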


Attainability~\ref{item:attainability:mv} follows from the fact that $\Lambda$
is continuous and vanishes at the boundaries. Property~\ref{item:one:mv} states
that $\lambda^\ast(C)=1$ if and only if $C$ is \emph{tail comonotonic};
see~\citet{hua2012tail}, \citet{hua2012tail2} and \citet{cheung2019additivity}
for the notion of tail comonotonicity. Property~\ref{item:zero:mv} shows that
$\lambda^\ast(C)=0$ if and only if $C(t\bx)$ converges to $0$ faster than
$t$. By~\ref{item:monotonocity:mv}, $\lambda^\ast$ is monotone with respect to
the tail dependence order introduced
in~\citet{siburg2024multivariate}. And~\ref{item:convexity:mv} shows that
$\lambda^\ast$ is convex.  Finally,~\ref{item:continuity:mv} means that
$\lambda^\ast$ is continuous with respect to pointwise convergence of copulas.

\section{MTCM for extreme value copulas}\label{sec:mtcm:ev}

\subsection{Copulas in a maximum domain of attraction}\label{sec:copulas:doa}

A $d$-dimensional \emph{extreme value copula (EVC)} is given by
\begin{align}
  C_\ell(\bm{u})=\exp(-\ell(-\log u_1,\dots,-\log u_d)),\quad \bm{u}=(u_1,\dots,u_d) \in [0,1]^d,\label{eq:EVC:form}
\end{align}
where $\ell:[0,\infty)^d\to [0,\infty)$ is a \emph{stable tail dependence
  function}; see~\citet{gudendorf2010extreme} and
\citet[Section~3.15]{joe2015dependence} on EVCs, and \cite{hofmann2009} and
\citet{ressel2013} for a characterization of stable tail dependence functions in
terms of D-norms. Stable tail dependence functions $\ell$ are convex,
$1$-homogeneous and satisfy
$\max(x_1,\dots,x_d)\le\ell(\bx)\le \sum_{j=1}^d x_j$, $\bx\in[0,\infty)^d$.  A
$d$-dimensional copula $C$ is said to be in the maximum domain of attraction of
an EVC $C_\ell$ if $\lim_{n \to \infty}C({\bm u}^{1/n})^n = C_{\ell}({\bm u})$
for all $\bm u\in[0,1]^d$.  Since $C_{\ell}({\bm u}^{1/n})^n = C_{\ell}({\bm u})$
for all $\bm u\in[0,1]^d$ and all integers $n\ge 1$, the EVC $C_\ell$ is in the maximum domain of attraction of itself.

In this section we study MTCMs for the (lower) tail of the survival copula of
$C$, where $C$ is in the maximum domain of attraction of an EVC $C_{\ell}$.
This corresponds to focusing on the upper tail of the copula $C$ under
consideration. Denote by $\hat C$ the survival copula of $C$.

The relationship between the tail copula and the stable tail
dependence function follows from the inclusion-exclusion principle; see~\citet[][Section~8]{rootzen2018multivariate}:
  \begin{align}\label{eq:tail:copula:stdf}
    \Lambda(\bx;\hat{C})
    =
    \sum_{\emptyset\neq S \subseteq \{1,\dots,d\}} (-1)^{|S|-1} \ell_S(\bx),
    \quad \bx\in(0,\infty)^d,
  \end{align}
  where $\ell_S(\bx)=\lim_{x_j\downarrow 0,\,j\notin S}\ell(\bx)$.
As a consequence, we have
  \begin{align*}
    \lambda^\ast(\hat C)
    =
    \sup_{\bm{b}\in \mathcal B}
    \sum_{\emptyset\neq S \subseteq \{1,\dots,d\}} (-1)^{|S|-1} \ell_S(\bm{b}).
  \end{align*}
For $d=2$, we have $\ell(x, 0) = \ell(0, x) = x$, $x> 0$, and thus
$\Lambda(x, y;\hat{C}) = x + y - \ell(x, y).$ A number of parametric families of
EVCs have been proposed in the literature, such as the H\"usler-Reiss (HR),
$t$-EV, asymmetric Gumbel and Galambos copulas; see~\citet{tawn1990modelling},
\citet[Chapter~6]{jaworskidurantehaerdlerychlik2010} and
\citet{joe2015dependence}.

\begin{Example}[Multivariate $t$ copulas]
  Multivariate (Student's) $t$ copulas admit tail copulas which coincide with those of the
  corresponding $t$-EVCs~\citep{demarta2005t,nikoloulopoulos2009extreme}. In the
  bivariate case, it is shown in~\citet{koike2026tail} that the attaining
  $b^\ast$ is $1$.  For $d\ge 3$, we observe by simulation of randomized
  correlation matrices $R=(r_{ij})$ that the attaining $\bm b^\ast$ typically deviates
  from $\bone_3=(1,1,1)$. Randomized $R$ are constructed by rejection, that is
  by first sampling independent $\U(0,1)$ entries above the diagonal and then
  accepting the positive semi-definite $R$ only.
  Figure~\ref{fig:bast:t} shows the corresponding realizations $(b_1^\ast,b_2^\ast)$ of
  $\bm b^\ast=(b_1^\ast,b_2^\ast,b_3^\ast)$ for the MTCM of a three-dimensional $t$ copula with
  $\nu=5$ degrees of freedom.
  For comparison, we also provide $(b_1^\ast,b_2^\ast)$ under the restriction that $r_{1,3}=r_{2,3}=0.4$.
  In this case, the first two coordinates are exchangeable, hence we naturally expect that $b_1^\ast=b_2^\ast$.
  \begin{figure}[tbp]
    \centering
    \includegraphics[width=\textwidth]{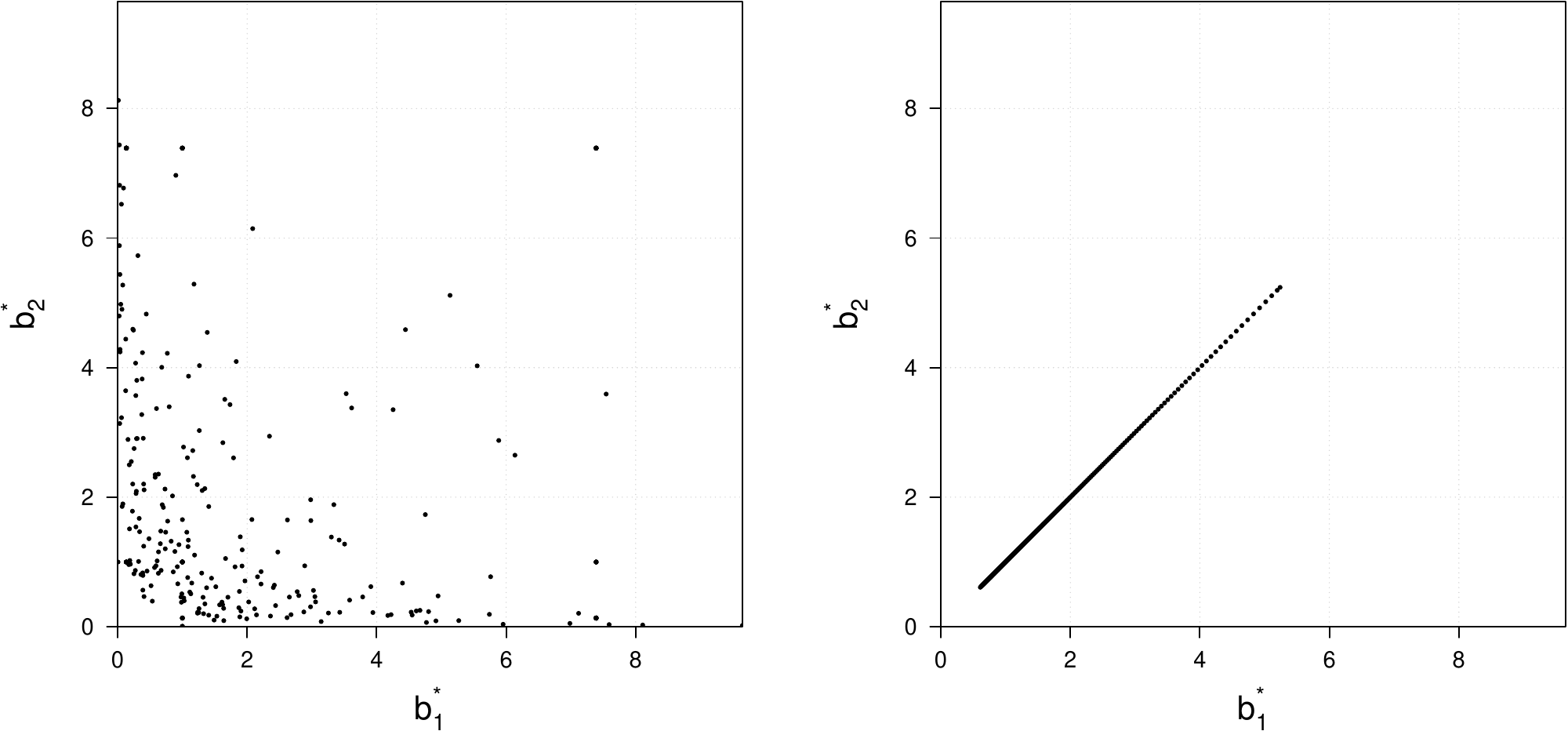}
    \caption{$(b_1^\ast,b_2^\ast)$ values of
      $\bm b^\ast=(b_1^\ast,b_2^\ast,1/(b_1^\ast b_2^\ast))$ for the MTCM of a
      three-dimensional $t$ copula with $\nu=5$ degrees of freedom and a set of
      $300$ randomized correlation matrices $R$. The left panel
      displays the case of unconstrained correlations, the right panel the
      constrained case $r_{1,3}=r_{2,3}=0.4$.}
    \label{fig:bast:t}
  \end{figure}
\end{Example}

\subsection{Survival Marshall--Olkin copulas}
We consider a parametric subclass of EVCs with stable tail dependence functions
\begin{align*}
  \ell_{\bm\alpha}(\bx)
  =
  \sum_{j=1}^d (1-\alpha_j)x_j+\max_{1\le j\le d}\{\alpha_j x_j\},
  \quad \bx\in[0,\infty)^d,
\end{align*}
where $\alpha_1,\dots,\alpha_d\in(0,1)$.
The corresponding EVC is the MO copula given by
\begin{align}
  C^{\mathrm{MO}}_{\bm\alpha}(\bm u)
  =
  \left(\,\prod_{j=1}^d u_j^{1-\alpha_j}\right)\min_{1\le j\le d}\{u_j^{\alpha_j}\},
  \quad \bm u\in[0,1]^d.
  \label{eq:mo:copula:explicit}
\end{align}
Note that this is a special case of~\citet[][Equation~(2)]{mai2010pickands} with
a single common shock affecting all components.

The next proposition shows that, for this family, both the MTCM and the
attaining $\bm b^\ast$ are available in closed form.
\begin{Proposition}[MTCM for survival Marshall--Olkin copulas]\label{prop:mo:mtcm}
  Let $\hat C^{\mathrm{MO}}_{\bm\alpha}$ be the survival copula of~\eqref{eq:mo:copula:explicit}
  for $\bm\alpha=(\alpha_1,\dots,\alpha_d)\in(0,1)^d$. Then the following statements hold:
  \begin{enumerate}[label=(\roman*), labelwidth=\widthof{(iii)}]
  \item
    $\hat C^{\mathrm{MO}}_{\bm\alpha}$ admits the tail copula $\Lambda(\bx;\hat C^{\mathrm{MO}}_{\bm\alpha})
      =
      \min_{1\le j\le d}\{\alpha_j x_j\}$, $\bx\in(0,\infty)^d$.
  \item
    Its MTCM is given by $\lambda^\ast(\hat C^{\mathrm{MO}}_{\bm\alpha})
      =
      \prod_{j=1}^d \alpha_j^{1/d}$.
  \item
    The maximizer in~\eqref{eq:def:mtcm:multivariate} is unique and equals
    \begin{align*}
      \bm b^\ast(\hat C^{\mathrm{MO}}_{\bm\alpha})
      =
      \left(\frac{\prod_{j=1}^d \alpha_j^{1/d}}{\alpha_1},
      \dots,
      \frac{\prod_{j=1}^d \alpha_j^{1/d}}{\alpha_d}
      \right)\in\mathcal B.
    \end{align*}
  \end{enumerate}
\end{Proposition}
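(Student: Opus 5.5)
For part (i) I will compute the tail copula through the inclusion–exclusion representation \eqref{eq:tail:copula:stdf}. This needs the marginal stable tail dependence functions $\ell_S$ of $\ell_{\bm\alpha}$. Letting $x_j\downarrow 0$ for $j\notin S$ gives
\begin{align*}
  \ell_S(\bx)=\sum_{j\in S}(1-\alpha_j)x_j+\max_{j\in S}\{\alpha_jx_j\}.
\end{align*}
Substituting this into \eqref{eq:tail:copula:stdf} splits the alternating sum into a linear part and a max part.

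The linear part contributes $\sum_{j}(1-\alpha_j)x_j\sum_{S\ni j}(-1)^{|S|-1}$. Its inner sum is $\sum_{k=0}^{d-1}\binom{d-1}{k}(-1)^k=0$ for $d\ge2$, so the whole linear part vanishes. The max part is $\sum_{\emptyset\ne S}(-1)^{|S|-1}\max_{j\in S} y_j$ with $y_j=\alpha_jx_j$. By the max–min inclusion–exclusion identity this equals $\min_j y_j$.

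To justify that identity, write $\max_{j\in S}y_j=\int_0^\infty \mathbf 1\{\exists j\in S: y_j>s\}\,ds$ and apply ordinary inclusion–exclusion to the events $\{y_j>s\}$. Alternatively, one can compute directly: with $C^{\mathrm{MO}}_{\bm\alpha}$ arising from a common-shock model, $\hat C(t\bx)$ is a probability of the form $\mathbb P(\text{all } 1-U_j\le tx_j)$. I will stick with the formula approach, since \eqref{eq:tail:copula:stdf} is given in the paper.

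For parts (ii) and (iii), set $y_j=\alpha_jb_j$ on $\mathcal B$, so that $\prod_j y_j=\prod_j\alpha_j=:A$. Then
\begin{align*}
  (\min_j y_j)^d\le \prod_j y_j=A,
\end{align*}
so $\Lambda(\bm b)\le A^{1/d}$. Equality holds if and only if all $y_j$ are equal, i.e.\ $\alpha_jb_j=A^{1/d}$ for every $j$. This gives $b_j^\ast=A^{1/d}/\alpha_j$, which is easily checked to lie in $\mathcal B$.

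Uniqueness comes from the strict inequality. If some $y_j$ exceeds the minimum, then $(\min_j y_j)^d<\prod_j y_j$, so $\Lambda(\bm b)<A^{1/d}$ at any other $\bm b\in\mathcal B$.

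The main obstacle is the bookkeeping in part (i): computing $\ell_S$ as the limit correctly, and cleanly invoking the max–min inclusion–exclusion identity. Parts (ii) and (iii) are an elementary AM–GM-type argument.
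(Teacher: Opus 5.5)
Your proposal is correct and follows the paper's proof essentially step by step: inclusion–exclusion via \eqref{eq:tail:copula:stdf}, the linear part vanishing because $\sum_{S\ni j}(-1)^{|S|-1}=(1-1)^{d-1}=0$, the max–min identity for the remaining part, and then the bound $(\min_j\alpha_jb_j)^d\le\prod_j\alpha_jb_j=\prod_j\alpha_j$ with equality if and only if all $\alpha_jb_j$ coincide, which gives both the value and the unique maximizer. The only cosmetic difference is how the max–min identity is justified: you use the layer-cake representation together with inclusion–exclusion for indicators, whereas the paper groups the subsets $S$ by the largest order statistic they contain and applies the binomial identity $\sum_{s=0}^{k-1}\binom{k-1}{s}(-1)^s=\mathbbm{1}_{\{k=1\}}$.
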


\section{Archimax and nested Archimedean copulas}\label{sec:archimax:hac}
In this section, we derive the MTCM and its maximizer for Archimax and
nested Archimedean copulas.

\subsection{MTCM for Archimax copulas}
A function $\psi:[0,\infty)\to [0,1]$ is an \emph{Archimedean generator}
if it is continuous, decreasing with $\psi(0)=1$ and $\psi(\infty)\coloneq\lim_{t\to\infty}\psi(t)=0$,
and strictly decreasing on $[0,\inf\{t:\psi(t)=0\}]$. The set of all such
$\psi$ is denoted by $\Psi$.
A $d$-dimensional copula $C$ is an \emph{Archimedean copula} if it is of the form
\begin{align*}
  C(\bm{u})=\psi(\psii(u_1)+\dots+\psii(u_d)),\quad\bm{u}\in[0,1]^d,
\end{align*}
for a $\psi\in\Psi$ with inverse $\psii:[0,1]\to [0,\infty]$ where
$\psii(0)=\inf\{t:\psi(t)=0\}$. According to \cite{malov2001} and
\cite{mcneilneslehova2009}, $C$ is a $d$-dimensional copula if and only if
$\psi$ is \emph{$d$-monotone}, that is $\psi$ is continuous on $[0,\infty)$,
admits derivatives up to the order $d-2$ satisfying $(-1)^k\psi^{(k)}(t)\ge 0$
for all $k\in\{0,\dots,d-2\}$, $t\in(0,\infty)$, and $(-1)^{d-2}\psi^{(d-2)}(t)$
is decreasing and convex on $(0,\infty)$. The set of all $d$-monotone
Archimedean generators is denoted by $\Psi_d$. An Archimedean generator $\psi$
is \emph{completely monotone} if it is $d$-monotone for all $d\in\IN$, that is $\psi$ is differentiable of any order and $(-1)^k\psi^{(k)}\ge 0$ for all
$k\in \N$. The set of all completely monotone Archimedean generators is denoted
by $\Psi_\infty$.

For an Archimedean generator $\psi\in\Psi_d$ and a stable tail dependence function
$\ell$, the \emph{Archimax copula} generated by $(\psi,\ell)$ is
\begin{align}
  C_{\psi,\ell}(\bm u)=\psi\left(\ell(\psi^{-1}(u_1),\dots,\psi^{-1}(u_d))\right),\quad \bm u\in[0,1]^d;\label{eq:def:archimax}
\end{align}
see \citet{charpentierfougeresgenestneslehova2014}. As a special case, if
$\ell(\bx)=\ell_\Pi(\bx):=\sum_{j=1}^d x_j$, then \eqref{eq:def:archimax} reduces to the
Archimedean copula with generator $\psi\in\Psi_d$.
As another special case, if $\psi(t)=e^{-t}$,
then~\eqref{eq:def:archimax} reduces to the standard representation~\eqref{eq:EVC:form}
of an EVC.

We can now provide the tail copula of Archimax copulas with regularly varying
generator $\psi\in\Psi_{\infty}$. To this end, a function
$f:(0,\infty)\to (0,\infty)$ is called \emph{regularly varying} (at $\infty$)
with index $\rho\in\R$ if $\lim_{x\to\infty} f(tx)/f(x)= t^\rho$
for all $t>0$.  Denote by $\mathrm{RV}_{\rho}$ the class of all regularly
varying functions with index $\rho\in\R$.

\begin{Proposition}[Tail copula of Archimax copulas]\label{prop:tail:cop:AC:reg:var:gen}
  Let $\psi\in\Psi_{\infty}\cap\mathrm{RV}_{-\alpha}$ for some $\alpha>0$, and
  let $\ell$ be a stable tail dependence function. Then the tail copula of the
  Archimax copula $C_{\psi,\ell}$ generated by $(\psi, \ell)$ is
  \begin{align*}
    \Lambda(\bx;C_{\psi,\ell})=\ell\left(x_1^{-1/\alpha},\dots,x_d^{-1/\alpha}\right)^{-\alpha},\quad \bx\in(0,\infty)^d.
  \end{align*}
\end{Proposition}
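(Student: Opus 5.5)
We compute the limit $C_{\psi,\ell}(t\bx)/t$ directly from~\eqref{eq:def:archimax}, using the regular variation of $\psi$ and of its inverse near $0$.

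\emph{Step 1: regular variation of $\psii$.} Since $\psi\in\mathrm{RV}_{-\alpha}$ with $\alpha>0$, $\psi$ is continuous and strictly decreasing on $(0,\infty)$. A completely monotone generator with $\psi(\infty)=0$ cannot reach $0$ at a finite point, so $\psii$ is finite on $(0,1]$. By the standard inversion theorem for regularly varying functions (Bingham--Goldie--Teugels), the inverse $u\mapsto\psii(u)$ satisfies, as $u\downarrow 0$,
\[
\frac{\psii(tu)}{\psii(t)}\to u^{-1/\alpha},\qquad t\downarrow 0,\ \text{for each fixed } u>0 .
\]
That is, $s\mapsto \psii(1/s)$ is in $\mathrm{RV}_{1/\alpha}$. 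In particular, $\psii(tx_j)/\psii(t)\to x_j^{-1/\alpha}$ for each $j$.

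\emph{Step 2: pass through $\ell$.} Write $s_t=\psii(t)\to\infty$ as $t\downarrow0$. By $1$-homogeneity of $\ell$,
\[
\ell\bigl(\psii(tx_1),\dots,\psii(tx_d)\bigr)=s_t\,\ell\bigl(\bm y_t\bigr),\qquad \bm y_t=\Bigl(\tfrac{\psii(tx_1)}{s_t},\dots,\tfrac{\psii(tx_d)}{s_t}\Bigr).
\]
Here $\bm y_t\to \bm y:=(x_1^{-1/\alpha},\dots,x_d^{-1/\alpha})\in(0,\infty)^d$. A finite convex function on $[0,\infty)^d$ is continuous on the interior; alternatively, $|\ell(\bm a)-\ell(\bm b)|\le\sum_j|a_j-b_j|$ follows from the bounds $\max\le\ell\le\sum$ together with convexity and homogeneity. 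Hence $\ell(\bm y_t)=\ell(\bm y)(1+o(1))$.

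\emph{Step 3: apply $\psi$ and use uniform convergence.} We now have
\[
\frac{C_{\psi,\ell}(t\bx)}{t}=\frac{\psi\bigl(s_t\,\ell(\bm y_t)\bigr)}{\psi(s_t)},
\]
because $\psi(s_t)=t$. Since $\psi\in\mathrm{RV}_{-\alpha}$, the uniform convergence theorem for regularly varying functions gives $\psi(s\lambda)/\psi(s)\to\lambda^{-\alpha}$ uniformly for $\lambda$ in compact subsets of $(0,\infty)$. The values $\ell(\bm y_t)$ eventually lie in a compact neighbourhood of $\ell(\bm y)>0$, so the ratio converges to $\ell(\bm y)^{-\alpha}=\ell(x_1^{-1/\alpha},\dots,x_d^{-1/\alpha})^{-\alpha}$. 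This is the claim.

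The main obstacle is justifying the two asymptotic-inversion facts rigorously: the regular variation of $\psii$ at $0$, and the uniformity needed to replace $\ell(\bm y_t)$ by its limit inside $\psi$. Both are handled by invoking the uniform convergence theorem, together with the monotonicity of $\psi$ to control sandwich bounds. If one prefers to avoid the inversion theorem, monotonicity lets us bound $\psii(tx_j)$ between $s_t(x_j^{-1/\alpha}\mp\varepsilon)$ for small $t$, by checking $\psi$ at those points via regular variation. Complete monotonicity is only used to guarantee that $\psi$ is a valid $d$-dimensional generator and is strictly positive, so no further properties are required.
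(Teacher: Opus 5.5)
Your proof is correct and follows essentially the paper's argument: both invert the regular variation of $\psi$ (Bingham--Goldie--Teugels, Theorem~1.5.12), pull a common scale out of $\ell$ by $1$-homogeneity, and then apply $\psi\in\mathrm{RV}_{-\alpha}$. Your normalization $s_t=\psi^{-1}(t)$, with $\psi(s_t)=t$, simply replaces the paper's identity $\tilde L(1/u)^{-\alpha}L\bigl(u^{-1/\alpha}\tilde L(1/u)\bigr)=1$, and your explicit use of the continuity of $\ell$ and of the uniform convergence theorem (or the monotone sandwich) justifies the substitutions that the paper's chain of $\simeq$ makes without comment.

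The only slip is in your aside. The bound $|\ell(\bm a)-\ell(\bm b)|\le\sum_j|a_j-b_j|$ does not follow from convexity, homogeneity and $\max\le\ell\le\sum$ alone: for $d=3$, $\max\{x_1,x_2,x_3,x_2+x_3-5x_1\}$ has all three properties, yet it equals $2$ at $(0,1,1)$ and $1.5$ at $(0.1,1,1)$. The bound does hold for genuine stable tail dependence functions, and in any case your primary argument, continuity of a finite convex function on $(0,\infty)^d$, already suffices.
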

For Archimedean copulas (the special case of $\ell=\ell_\Pi$), we have
\begin{align}
  \Lambda(\bx; C_{\psi,\ell_\Pi})=\left(\,\sum_{j=1}^d x_j^{-1/\alpha}\right)^{-\alpha},\quad \bx\in (0,\infty)^d.\label{eq:S:alpha}
\end{align}

The following example shows that various Archimedean generators are regularly
varying and thus the corresponding Archimedean copulas admit tail copulas of
Form~\eqref{eq:S:alpha}.
\begin{Example}[Regularly varying Archimedean generators]
  \mbox{}
  \begin{enumerate}[label=\arabic*), labelwidth=\widthof{1)}]
  \item If $\psi\in\Psi_\infty\cap\mathrm{RV}_{-\alpha}$ and the \emph{inner power transform}~\citep{nelsen2006introduction,hofert2010c}
$\tilde\psi(t)=\psi(t)^{1/\gamma}$, $\gamma\in(0,1]$, is again a completely
monotone Archimedean generator, then
$\tilde\psi\in\mathrm{RV}_{-\alpha/\gamma}$ since
\begin{align*}
  \lim_{x\to\infty}\frac{\tilde{\psi}(tx)}{\tilde{\psi}(x)}=\left(\lim_{x\to\infty}\frac{\psi(tx)}{\psi(x)}\right)^{1/\gamma}=t^{-\alpha/\gamma},\qquad t\in(0,\infty).
  \end{align*}
  \item If $\psi\in\Psi_\infty\cap\mathrm{RV}_{-\alpha}$, the corresponding
    completely monotone \emph{outer power Archimedean generator}
    \citep{nelsen2006introduction, hofert2010c,hofert2011a} is
    $\tilde{\psi}(t)=\psi(t^{1/\beta})$, $t\ge 0$, $\beta\ge 1$.  Then $\tilde{\psi}\in\mathrm{RV}_{-\alpha/\beta}$.
    To see this, let $y=x^{1/\beta}$. Then $y\to\infty$ and
 \begin{align*}
\frac{\tilde\psi(tx)}{\tilde\psi(x)}
=
\frac{\psi(t^{1/\beta}y)}{\psi(y)}
\to
(t^{1/\beta})^{-\alpha}
=
t^{-\alpha/\beta},\qquad x\to \infty,\quad
\end{align*}
    \begin{align*}
      \lim_{x\to\infty}\frac{\tilde{\psi}(tx)}{\tilde{\psi}(x)}=\lim_{x\to\infty}\frac{\psi(t^{1/\beta}x^{1/\beta})}{\psi(x^{1/\beta})}=\lim_{x\to\infty}\frac{\psi(t^{1/\beta}x)}{\psi(x)}=(t^{1/\beta})^{-\alpha}=t^{-\alpha/\beta},\quad t\in(0,\infty).
    \end{align*}
    In contrast to Clayton copulas, outer power Clayton copulas have upper tail
    dependence; see \citet{hofert2011a} and, for an application of outer power
    Clayton copulas, \citet{hofertscherer2011}.
  \item For specific generators, more transformations are feasible. For example,
    consider the Clayton generator $\psi(t)=(1+t)^{-1/\theta}$, $t\ge 0$, $\theta>0$,
    which is easily verified to satisfy $\psi\in\Psi_\infty\cap \mathrm{RV}_{-1/\theta}$.
    \begin{enumerate}[label=(\roman*), labelwidth=\widthof{(ii)}]
    \item The corresponding \emph{tilted Clayton
        generator}~\citep{hofert2010c,hofert2011a}
      is 
      $\tilde{\psi}(t)=\psi((c^\beta+t)^{1/\beta}-c)$, $t\in[0,\infty)$,
      $\beta\ge 1$, $c\ge 0$. Then
      $\tilde{\psi}\in\mathrm{RV}_{-1/(\theta\beta)}$ since
      \begin{align*}
        \lim_{x\to\infty}\frac{\tilde{\psi}(t x)}{\tilde{\psi}(x)}=\left(\lim_{x\to\infty}\frac{1+(c^\beta+t x)^{1/\beta}-c}{1+(c^\beta+x)^{1/\beta}-c}\right)^{-1/\theta}=t^{-1/(\theta\beta)},\quad t >0.
      \end{align*}
    \item The corresponding \emph{shifted Archimedean
        generator}~\citep{hofert2010c,hofert2011a} is
      $\tilde{\psi}(t)=\psi(t+h)/\psi(h)$, $t\ge 0$, $h\ge 0$.
      Then, irrespective of $h$, we have $\tilde{\psi}\in\mathrm{RV}_{-1/\theta}$ since
      \begin{align*}
        \lim_{x\to\infty}\frac{\tilde{\psi}(t x)}{\tilde{\psi}(x)}=\lim_{x\to\infty}\frac{\psi(t x+h)}{\psi(x+h)}=\left(\lim_{x\to\infty}\frac{1+h+t x}{1+h+x}\right)^{-1/\theta}=t^{-1/\theta},\quad t >0.
      \end{align*}
    \end{enumerate}
  \end{enumerate}
\end{Example}

Based on Proposition~\ref{prop:tail:cop:AC:reg:var:gen}, we can now provide the MTCM of Archimax copulas.
\begin{Theorem}[MTCM for Archimax copulas]\label{thm:mtcm:archimax:minimization}
  Let $C_{\psi,\ell}$ be the Archimax copula generated by $\psi\in\Psi_\infty\cap\mathrm{RV}_{-\alpha}$, $\alpha>0$,
  and the stable tail dependence function $\ell$. Then
  \begin{align*}
    \lambda^\ast(C_{\psi,\ell})
    = \max_{\bm b\in\mathcal B}
    \ell(b_1^{-1/\alpha},\dots,b_d^{-1/\alpha})^{-\alpha}
    = \left\{\min_{\bm z\in\mathcal B}\ell(\bm z)\right\}^{-\alpha}.
  \end{align*}
  Moreover, the following statements are equivalent:
  \begin{enumerate}[label=(\roman*), labelwidth=\widthof{(ii)}]
  \item $\Lambda(\cdot;C_{\psi,\ell})$ is uniquely maximized at  $\bm b^\ast\in\mathcal B$.
  \item The function $\ell$ on $\mathcal B$ is uniquely minimized at $\bm z^\ast=((b_1^{\ast})^{-1/\alpha},\dots,(b_d^{\ast})^{-1/\alpha})\in\mathcal B$.
  \end{enumerate}
\end{Theorem}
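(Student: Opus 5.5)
The proof is a direct consequence of Proposition~\ref{prop:tail:cop:AC:reg:var:gen} combined with a change of variables on $\mathcal B$. By that proposition, for every $\bm b\in\mathcal B$ we have
\begin{align*}
\Lambda(\bm b;C_{\psi,\ell})=\ell(b_1^{-1/\alpha},\dots,b_d^{-1/\alpha})^{-\alpha}.
\end{align*}
Since the supremum in~\eqref{eq:def:mtcm:multivariate} is attained by Proposition~\ref{prop:basic:properties:extension}~\ref{item:attainability:mv}, this already gives the first equality with $\max$ in place of $\sup$.

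For the second equality, introduce the map $T:\mathcal B\to\mathcal B$, $T(\bm b)=(b_1^{-1/\alpha},\dots,b_d^{-1/\alpha})$. This map is well defined, because $\prod_j b_j^{-1/\alpha}=(\prod_j b_j)^{-1/\alpha}=1$. It is also a bijection, with inverse $\bm z\mapsto(z_1^{-\alpha},\dots,z_d^{-\alpha})$, which is again in $\mathcal B$ for the same reason. Hence
\begin{align*}
\sup_{\bm b\in\mathcal B}\ell(T(\bm b))^{-\alpha}=\sup_{\bm z\in\mathcal B}\ell(\bm z)^{-\alpha}.
\end{align*}
Since $\ell\ge\max_j z_j>0$ on $\mathcal B$, the map $s\mapsto s^{-\alpha}$ is strictly decreasing on the range of $\ell$. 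Therefore maximizing $\ell^{-\alpha}$ is the same as minimizing $\ell$:
\begin{align*}
\sup_{\bm z\in\mathcal B}\ell(\bm z)^{-\alpha}=\Bigl\{\inf_{\bm z\in\mathcal B}\ell(\bm z)\Bigr\}^{-\alpha}.
\end{align*}
The infimum is attained, because the supremum on the left is attained at some $\bm b^\ast$, and then $T(\bm b^\ast)$ attains the infimum. This justifies writing $\min$.

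It remains to check that the infimum is positive, so that the expression is finite. This follows from $\max_j z_j\ge 1$ whenever $\prod_j z_j=1$, which gives $\min_{\bm z\in\mathcal B}\ell\ge1$ and hence $\lambda^\ast\le 1$, consistent with Proposition~\ref{prop:basic:properties:extension}.

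For the equivalence of (i) and (ii), the key fact is that $\bm b$ maximizes $\Lambda(\cdot;C_{\psi,\ell})$ on $\mathcal B$ if and only if $T(\bm b)$ minimizes $\ell$ on $\mathcal B$. This holds by the strict monotonicity of $s\mapsto s^{-\alpha}$ together with the bijectivity of $T$. Consequently $T$ maps the set of maximizers bijectively onto the set of minimizers. One set is a singleton exactly when the other is, and in that case the unique minimizer is $\bm z^\ast=T(\bm b^\ast)$, as stated in (ii).

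None of these steps is a genuine obstacle; the only point needing care is that $T$ maps $\mathcal B$ onto itself, which is the product computation above.
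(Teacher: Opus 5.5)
Your proposal is correct and follows essentially the same route as the paper: it uses the tail-copula formula from Proposition~\ref{prop:tail:cop:AC:reg:var:gen}, attainability from Proposition~\ref{prop:basic:properties:extension}\ref{item:attainability:mv}, and the bijection $T_\alpha(\bm b)=(b_1^{-1/\alpha},\dots,b_d^{-1/\alpha})$ of $\mathcal B$ onto itself, under which maximizers of $\Lambda$ correspond to minimizers of $\ell$. Your extra checks (positivity of $\ell$ on $\mathcal B$, strict monotonicity of $s\mapsto s^{-\alpha}$, and $\min_{\mathcal B}\ell\ge 1$) are valid details that the paper leaves implicit.
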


For Archimax copulas, exchangeability of $\ell$ already implies that the
maximizer is unique and satisfies $\bm b^\ast=\bone_d=(1,\dots,1)$.
\begin{Corollary}[Archimax copulas under exchangeability]\label{cor:mtcm:archimax:symmetric}
  Under the assumptions of Theorem~\ref{thm:mtcm:archimax:minimization},
  assume in addition that $\ell$ is exchangeable. Then $\lambda^\ast(C_{\psi,\ell})=\ell(\bone_d)^{-\alpha}$
  and the maximizer is uniquely given by $\bm b^\ast=\bone_d$.
\end{Corollary}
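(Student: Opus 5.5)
By Theorem~\ref{thm:mtcm:archimax:minimization}, it suffices to show that the convex function $\ell$ restricted to $\mathcal B$ has the unique minimizer $\bm z^\ast=\bone_d$. Once this is shown, the equivalence in that theorem gives $\bm b^\ast$ through $(b_j^\ast)^{-1/\alpha}=1$, that is $\bm b^\ast=\bone_d$. The max--min identity then yields $\lambda^\ast(C_{\psi,\ell})=\ell(\bone_d)^{-\alpha}$.

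\emph{Existence and a lower bound.} Take any $\bm z\in\mathcal B$. Since the constraint set $\mathcal B$ is not convex, I will not argue by convexity on $\mathcal B$ directly. Instead, I compare $\ell(\bm z)$ with $\ell$ evaluated at the arithmetic mean of the permuted vectors. Let $\bar z=\frac1d\sum_j z_j$. Exchangeability gives $\ell(\bm z)=\ell(\bm z_\sigma)$ for every permutation $\sigma$. Averaging over all $d!$ permutations and using convexity of $\ell$,
\begin{align*}
\ell(\bm z)=\frac{1}{d!}\sum_\sigma \ell(\bm z_\sigma)\ge \ell\Bigl(\frac{1}{d!}\sum_\sigma \bm z_\sigma\Bigr)=\ell(\bar z\,\bone_d)=\bar z\,\ell(\bone_d),
\end{align*}
where the last step uses $1$-homogeneity. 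By AM--GM and $\prod_j z_j=1$, we have $\bar z\ge 1$. Moreover, $\ell(\bone_d)\ge \max_j 1=1>0$. Together these give $\ell(\bm z)\ge \ell(\bone_d)$, so $\bone_d\in\mathcal B$ is a minimizer. This also produces the minimum in the display of Theorem~\ref{thm:mtcm:archimax:minimization}.

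\emph{Uniqueness.} Suppose $\ell(\bm z)=\ell(\bone_d)$. Then both inequalities in the chain above must be equalities. In particular $\bar z\,\ell(\bone_d)=\ell(\bone_d)$, and since $\ell(\bone_d)>0$ this forces $\bar z=1$. Equality in AM--GM holds only when all $z_j$ are equal, and the constraint $\prod_j z_j=1$ then gives $\bm z=\bone_d$. This avoids needing strict convexity of $\ell$, which fails in general (for example for $\ell_\Pi$).

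\emph{Main obstacle.} The only delicate point is that $\mathcal B$ is not convex, so ordinary symmetrization-plus-convexity on $\mathcal B$ does not apply. The argument above handles this by averaging in $[0,\infty)^d$ and using homogeneity to return to the ray through $\bone_d$. The strict part of AM--GM, combined with $\ell(\bone_d)>0$, supplies uniqueness.
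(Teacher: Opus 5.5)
Your proposal is correct and takes essentially the same route as the paper: you symmetrize over permutations and use convexity to get $\ell(\bm z)\ge \ell(\bar z\bone_d)=\bar z\,\ell(\bone_d)$, then use AM--GM and its equality case for both minimality and uniqueness of $\bone_d$, and finally transfer back via Theorem~\ref{thm:mtcm:archimax:minimization}. Your explicit remark that $\ell(\bone_d)\ge 1>0$ justifies the strict inequality that the paper's uniqueness step uses only implicitly.
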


\subsection{Nested Archimedean copulas}
As a generalization of Archimedean copulas, we now consider \emph{nested Archimedean copulas}, which are
Archimedean copulas with arguments possibly replaced by (nested) Archimedean
copulas. For a description of specific members of this class of copulas, we
need some notions of the realm of graph theory.

Let $\mathcal T=(\mathcal V,\mathcal E)$ be an undirected, rooted tree with a
set of vertices $\mathcal V$ and a set of edges
$\mathcal E\subseteq\mathcal V\times \mathcal V$.  Denote by $r$ the root, by
$\mathcal L$ the set of leaves and by
$\mathcal I:=\mathcal V\setminus\mathcal L$ the set of internal vertices of
$\mathcal T$. For $v\in \mathcal V$, let $\mathrm{pa}(v)$ denote the
direct 
parent vertex of $v$ and $\mathrm{ch}(v)$ the set of direct children of $v$,
with $\mathrm{pa}(r)=\emptyset$ and $\mathrm{ch}(v)=\emptyset$,
$v \in \mathcal L$. Let $\mathrm{le}(v)\subseteq\mathcal L$ denote the set of
all leaves of $v$ and $d(v):=|\mathrm{le}(v)|$ its cardinality (with
$\mathrm{le}(v)=\{v\}$ and thus $d(v)=1$ for all $v\in\mathcal
L$). 
Furthermore, let $\mathrm{an}(v)$ denote the (possibly empty) set of all
ancestors of $v\in\mathcal V$, excluding the root $r$.  For a subtree
$\mathcal{T}_v=(\mathcal{V}_v,\mathcal{E}_v)$ of $\mathcal T$ rooted at
$v\in\mathcal V$, we denote by $\mathcal{I}_v$ the set of internal vertices of
$\mathcal{T}_v$ and by $\mathrm{an}_v(w)$ the (possibly empty) set of all
ancestors of $w\in\mathcal{V}_v$, excluding the root $v$.

Based on the set $\Psi_{\mathcal T}:=\{\psi_v\}_{v\in\mathcal I}$ of Archimedean
generators from $\Psi_\infty$, a nested Archimedean copula can now be fully
described by $(\mathcal T,\Psi_{\mathcal T})$, where we associate each vertex with a $[0,1]$-valued function and leaves with variables $u_1,\dots,u_d$.
In particular, internal vertices represent marginal (nested) Archimedean copulas. 
To describe said function at a vertex $v$, recursively define
\begin{align*}
  C_v(\bm u_{\mathrm{le}(v)})=\begin{cases}
    u_v,& v\in\mathcal L,\\
    \psi_v\left(\sum_{w\in\mathrm{ch}(v)}\psi_v^{-1}(C_w(\bm u_{\mathrm{le}(w)}))\right),& v\in\mathcal I. 
  \end{cases}
\end{align*}
Then $C_{(\mathcal T,\Psi_{\mathcal T})}:=C_r$ is the \emph{nested Archimedean copula}
associated with $(\mathcal T,\Psi_{\mathcal T})$.

To guarantee that the resulting nested construction is indeed a proper copula, we
assume the following \emph{sufficient nesting condition} of \cite{mcneil2008} to hold:
\begin{align*}
  (\psi_{\mathrm{pa}(v)}^{-1}\circ\psi_v)'\ \text{is completely monotone for every}\ v\in\mathcal I\setminus\{r\}.
\end{align*}
In addition, we say that a nested Archimedean copula $C_{(\mathcal T,\Psi_{\mathcal T})}$ has \emph{regularly varying generators} if  for every $v \in \mathcal I$ there exists $\alpha_v>0$ such that $\psi_v\in\Psi_\infty\cap\mathrm{RV}_{-\alpha_v}$.

We can now describe the tail copula of a nested Archimedean copula $C_{(\mathcal T,\Psi_{\mathcal T})}$
with regularly varying generators.
\begin{Proposition}[Tail copula of a nested Archimedean copula]\label{prop:tail:cop:tree:nested:archimedean}
  Let $C_{(\mathcal T,\Psi_{\mathcal T})}$ be a nested Archimedean copula with regularly varying generators.
  Let
  \begin{align}\label{eq:recursion:tail:copula:tree}
    \Lambda_v(\bx_{\mathrm{le}(v)})=\begin{cases}
      x_v,& v \in \mathcal L,\\
      \left(\sum_{w\in\mathrm{ch}(v)} \Lambda_w(\bx_{\mathrm{le}(w)})^{-1/\alpha_v}\right)^{-\alpha_v},& v\in\mathcal I.
    \end{cases}
  \end{align}
  Then $\Lambda(\cdot;C_v)=\Lambda_v(\cdot)$ for all $v \in \mathcal V$.
\end{Proposition}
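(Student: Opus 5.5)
We argue by structural induction on the tree $\mathcal T$, from the leaves towards the root. For a leaf $v\in\mathcal L$ we have $C_v(u_v)=u_v$, so $C_v(tx_v)/t=x_v=\Lambda_v(x_v)$ and there is nothing to show. For an internal vertex $v\in\mathcal I$ we assume, as induction hypothesis, that $C_w(t\bx_{\mathrm{le}(w)})/t\to\Lambda_w(\bx_{\mathrm{le}(w)})$ for every child $w\in\mathrm{ch}(v)$. We then need to show
\begin{align*}
  \frac{1}{t}\,\psi_v\Bigl(\sum_{w\in\mathrm{ch}(v)}\psi_v^{-1}\bigl(C_w(t\bx_{\mathrm{le}(w)})\bigr)\Bigr)\;\to\;\Bigl(\sum_{w\in\mathrm{ch}(v)}\Lambda_w(\bx_{\mathrm{le}(w)})^{-1/\alpha_v}\Bigr)^{-\alpha_v},\quad t\downarrow 0.
\end{align*}

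This is the same computation that underlies Proposition~\ref{prop:tail:cop:AC:reg:var:gen} with $\ell=\ell_\Pi$. The only change is that the arguments are now $C_w(t\bx_{\mathrm{le}(w)})=t\,(\Lambda_w+o(1))$ rather than exactly $tx_j$. We proceed in three steps.

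\emph{Step 1.} Since $\psi_v\in\mathrm{RV}_{-\alpha_v}$ and $\psi_v$ is continuous and strictly decreasing, its inverse satisfies $\psi_v^{-1}\in\mathrm{RV}_{-1/\alpha_v}$ at $0^+$. That is, $\psi_v^{-1}(su)/\psi_v^{-1}(u)\to s^{-1/\alpha_v}$ as $u\downarrow 0$, and this convergence is locally uniform in $s\in(0,\infty)$ by the uniform convergence theorem for regularly varying functions.

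\emph{Step 2.} Put $s_w(t)=C_w(t\bx_{\mathrm{le}(w)})/t$. By the induction hypothesis $s_w(t)\to\Lambda_w>0$; the value $\Lambda_w$ is positive because $\Lambda_w$ is a composition of positive power means, which is clear from \eqref{eq:recursion:tail:copula:tree}. Local uniformity then gives
\begin{align*}
  \psi_v^{-1}\bigl(s_w(t)\,t\bigr)=\psi_v^{-1}(t)\,\bigl(\Lambda_w^{-1/\alpha_v}+o(1)\bigr).
\end{align*}
Summing over the children yields $\psi_v^{-1}(t)\,(S+o(1))$ with $S=\sum_{w\in\mathrm{ch}(v)}\Lambda_w^{-1/\alpha_v}$.

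\emph{Step 3.} Let $y=\psi_v^{-1}(t)\to\infty$, so that $t=\psi_v(y)$. Uniform convergence for $\psi_v$ gives
\begin{align*}
  \frac{\psi_v\bigl(y(S+o(1))\bigr)}{\psi_v(y)}\to S^{-\alpha_v},
\end{align*}
which is exactly $\Lambda_v$. Applying this at the root gives $\Lambda(\cdot;C_r)=\Lambda_r$, and since every $C_v$ is itself a nested Archimedean copula (a margin of $C_r$), the claim holds for all $v\in\mathcal V$.

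The main obstacle is Step 2. There we must combine the inexact inner convergence $s_w(t)\to\Lambda_w$ with regular variation, which requires the uniform convergence theorem (or monotonicity sandwiching: bound $s_w(t)$ between $\Lambda_w\pm\varepsilon$ for small $t$, use that $\psi_v^{-1}$ is decreasing, and let $\varepsilon\downarrow0$). The sandwich argument is the cleanest, and I would use it in both Step 2 and Step 3 rather than cite uniformity. The nesting condition plays no role in the limit; it only guarantees that each $C_v$ is a copula.
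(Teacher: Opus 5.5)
Your proof is correct and follows essentially the same route as the paper: induction over the tree, passing the inner asymptotics $C_w(t\bx_{\mathrm{le}(w)})\simeq t\,\Lambda_w(\bx_{\mathrm{le}(w)})$ first through $\psi_v^{-1}$ and then through $\psi_v$ via regular variation. The only difference is bookkeeping. You use the ratio form of regular variation together with the substitution $y=\psi_v^{-1}(t)$, whereas the paper writes $\psi_v(t)=t^{-\alpha_v}L_v(t)$, $\psi_v^{-1}(u)=u^{-1/\alpha_v}\tilde L_v(1/u)$ and uses the identity linking $L_v$ and $\tilde L_v$; your uniform-convergence or monotone-sandwich argument also justifies explicitly the steps that the paper's $\simeq$ chain takes for granted.
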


Under the same setup,
the MTCM of a nested Archimedean copula $C_{(\mathcal T,\Psi_{\mathcal T})}$ is given as follows.
\begin{Theorem}[MTCM for nested Archimedean copulas]\label{thm:mtcm:nested:archimedean:recursive}
  Let $C_{(\mathcal T,\Psi_{\mathcal T})}$ be a nested Archimedean copula with regularly varying
  generators.
  Then the following statements hold.
  Below, an empty product is understood as $1$.
  \begin{enumerate}[label=(\roman*)]
  \item\label{prop:mtcm:nested:archimedean:recursive:i} Recursively define
    \begin{align}
      \lambda_v^\ast
      =
      \begin{cases}
        1,
        & v\in\mathcal L,\\
        d(v)^{-\alpha_v}
        \displaystyle
        \prod_{w\in\mathrm{ch}(v)}
        \left\{d(w)^{\alpha_v}\lambda_w^\ast\right\}^{d(w)/d(v)},
        & v\in\mathcal I.
      \end{cases}
      \label{eq:recursion:lambda:tree}
    \end{align}
    Then $\lambda^\ast(\Lambda_v)=\lambda_v^\ast$ for all $v\in\mathcal I$.
  \item For all $v \in \mathcal I$, the MTCM of $C_v$ admits the closed form
    \begin{align}
      \lambda_v^\ast
      =
      d(v)^{-\alpha_v}
      \prod_{w\in\mathcal I_v\setminus\{v\}}
      d(w)^{
      (\alpha_{\mathrm{pa}(w)}-\alpha_w)d(w)/d(v)
      }.
      \label{eq:closed:form:lambda:subtree}
    \end{align}

  \item For all $v \in \mathcal I$, the maximizer
    $
    \bm b_v^\ast
    $
    of the MTCM of $C_v$ is unique and given by
    \begin{align*}
     (\bm b_v^\ast)_j
      =
      \lambda_v^\ast\ d(v)^{\alpha_v}\!\!\!\prod_{w\in\mathrm{an}_v(j)}
      d(w)^{\alpha_w-\alpha_{\mathrm{pa}(w)}},
    \end{align*}
  for each $j$ associated with each leaf in $\mathrm{le}(v)$.
  \end{enumerate}
\end{Theorem}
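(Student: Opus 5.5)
Our approach is induction on the height of the subtree $\mathcal T_v$, using Proposition~\ref{prop:tail:cop:tree:nested:archimedean} together with a weighted AM--GM argument. The key tool is a \emph{generalized MTCM}: for $v\in\mathcal V$ and $s>0$, set
\begin{align*}
  M_v(s)=\sup\Bigl\{\Lambda_v(\bx_{\mathrm{le}(v)}):\ \prod_{j\in\mathrm{le}(v)}x_j=s\Bigr\}.
\end{align*}
Each $\Lambda_v$ is $1$-homogeneous; this follows inductively from~\eqref{eq:recursion:tail:copula:tree}. Scaling $\bx\mapsto c\bx$ multiplies the product by $c^{d(v)}$, so $M_v(s)=s^{1/d(v)}\lambda_v^\ast$, where $\lambda_v^\ast=M_v(1)$. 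For a leaf this gives $M_v(s)=s$ and $\lambda^\ast_v=1$.

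For $v\in\mathcal I$, the map $y\mapsto y^{-1/\alpha_v}$ is decreasing and $\Lambda_v$ is increasing in each child value. Hence maximizing $\Lambda_v$ under $\prod_j x_j=1$ amounts to two steps. First, split the constraint as $\prod_{w\in\mathrm{ch}(v)}s_w=1$, with $s_w=\prod_{j\in\mathrm{le}(w)}x_j$. Second, maximize each child separately, which gives $\Lambda_w=s_w^{1/d(w)}\lambda_w^\ast$.

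One is then left with minimizing $\sum_w (\lambda_w^\ast)^{-1/\alpha_v}s_w^{-1/(\alpha_v d(w))}$ subject to $\prod_w s_w=1$. Write $a_w=(\lambda_w^\ast)^{-1/\alpha_v}s_w^{-1/(\alpha_v d(w))}$ and group the sum as $\sum_w d(w)\,(a_w/d(w))$. Weighted AM--GM with weights $d(w)/d(v)$ gives
\begin{align*}
  \sum_w a_w\ \ge\ d(v)\prod_w\Bigl(\frac{a_w}{d(w)}\Bigr)^{d(w)/d(v)}.
\end{align*}
The $s_w$ drop out of the right-hand side because $\prod_w s_w^{-1/(\alpha_v d(v))}=1$. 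Equality holds if and only if all $a_w/d(w)$ are equal, which fixes $s_w$ uniquely.

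Raising to the power $-\alpha_v$ yields exactly~\eqref{eq:recursion:lambda:tree}, which proves (i). Attainment is also guaranteed by Proposition~\ref{prop:basic:properties:extension}\ref{item:attainability:mv}.

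Part (ii) follows by unrolling the recursion. Taking logarithms, $\log\lambda^\ast_v=-\alpha_v\log d(v)+\sum_w \frac{d(w)}{d(v)}(\alpha_v\log d(w)+\log\lambda^\ast_w)$. Substituting the inductive closed form for $\lambda_w^\ast$ (with $d(w)/d(v)\cdot 1/d(w)=1/d(v)$ factors), the term $-\alpha_w\log d(w)$ from $\lambda_w^\ast$ combines with $\alpha_v\log d(w)=\alpha_{\mathrm{pa}(w)}\log d(w)$. This produces the exponent $(\alpha_{\mathrm{pa}(w)}-\alpha_w)d(w)/d(v)$. Leaves contribute $\log 1=0$, and deeper internal vertices keep their exponents rescaled from $1/d(w)$ to $1/d(v)$, as in~\eqref{eq:closed:form:lambda:subtree}.

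For (iii), the equality condition pins down the optimal child values. Equal $a_w/d(w)$ means $\Lambda_w(\bx^\ast)^{-1/\alpha_v}=d(w)\,c$ for a common constant $c$. Summing over $w$ gives $d(v)c=(\lambda_v^\ast)^{-1/\alpha_v}$, so
\begin{align*}
  \Lambda_w(\bx^\ast)=\lambda_v^\ast\bigl(d(v)/d(w)\bigr)^{\alpha_v}.
\end{align*}
Within child $w$, the optimum is the scaled copy $\bx^\ast_{\mathrm{le}(w)}=\kappa_w\,\bm b_w^\ast$. Since $\Lambda_w(\bm b_w^\ast)=\lambda_w^\ast$, we get $\kappa_w=\lambda_v^\ast d(v)^{\alpha_v}d(w)^{-\alpha_v}/\lambda_w^\ast$. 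Uniqueness propagates inductively because each child maximizer is unique and the split of the $s_w$ is unique. For a leaf child, $\bm b^\ast_w=1$ and $\lambda^\ast_w=1$, which gives $(\bm b_v^\ast)_j=\lambda_v^\ast d(v)^{\alpha_v}$, matching the formula with an empty product. For an internal child, insert the inductive formula $(\bm b^\ast_w)_j=\lambda_w^\ast d(w)^{\alpha_w}\prod_{u\in\mathrm{an}_w(j)}d(u)^{\alpha_u-\alpha_{\mathrm{pa}(u)}}$. The factor $\lambda_w^\ast$ cancels, leaving $\lambda_v^\ast d(v)^{\alpha_v} d(w)^{\alpha_w-\alpha_v}\prod_{u\in\mathrm{an}_w(j)}(\cdots)$. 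Since $\mathrm{an}_v(j)=\{w\}\cup\mathrm{an}_w(j)$ and $\mathrm{pa}(w)=v$, this is the claimed formula.

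The main obstacle I expect is the reduction step: justifying that the joint maximization decouples into a product constraint across children plus independent inner maximizations. This requires monotonicity of $\Lambda_v$ in each $\Lambda_w$, and strictness of that monotonicity for uniqueness. The exponent bookkeeping in (ii) and (iii) is the other delicate point.
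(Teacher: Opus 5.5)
Your proof is correct. It shares the paper's skeleton: induction on the height of $\mathcal T_v$, splitting the product constraint into child products $s_w$, and using that $(y_w)_w\mapsto\bigl(\sum_w y_w^{-1/\alpha_v}\bigr)^{-\alpha_v}$ is continuous and strictly increasing in each child value. The central optimization step and part (iii), however, are done differently.

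\textbf{The paper's route.} It proves $M_v(p)=\lambda_v^\ast p^{1/d(v)}$ inside the induction. It then solves the split problem by passing to $q_w=\log p_w$ and showing the objective $F_v$ is strictly convex and coercive on the hyperplane $\sum_w q_w=\log p$. Existence and uniqueness come from Weierstrass-type and strict-convexity arguments, and the optimum is read off from the Lagrange conditions. Part (iii) is then obtained by telescoping the explicit optimal child products $p_w^\ast(p)$ along the root-to-leaf path.

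\textbf{Your route.}
\begin{itemize}
\item You get $M_v(s)=s^{1/d(v)}\lambda_v^\ast$ for free from $1$-homogeneity.
\item Your weighted AM--GM with weights $d(w)/d(v)$ gives the bound, its attainment and the uniqueness of the split at once, with no appeal to convex analysis or sufficiency of first-order conditions.
\item Your part (iii) is a bottom-up induction. The equality case gives $\Lambda_w(\bx^\ast)=\lambda_v^\ast(d(v)/d(w))^{\alpha_v}$, and each child block is the rescaled child maximizer $\kappa_w\bm b_w^\ast$. The formula is then inherited via $\mathrm{an}_v(j)=\{w\}\cup\mathrm{an}_w(j)$, with no path telescoping needed.
\end{itemize}
Your argument is shorter and more elementary. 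The paper's Lagrangian route is more mechanical and yields the explicit allocation $p_w^\ast(p)$ as a by-product.

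\textbf{Two steps to write out rather than assert.}
\begin{enumerate}
\item Check that the AM--GM equality case is compatible with $\prod_w s_w=1$. Solving $a_w/d(w)=c$ gives each $s_w$ as a power of $c$, and the constraint then fixes a unique $c>0$.
\item The uniqueness step you flag works via the sandwich $\Lambda_v(\bx)\le\bigl(\sum_w M_w(s_w)^{-1/\alpha_v}\bigr)^{-\alpha_v}\le\lambda_v^\ast$, as in the paper. Equality in the second inequality forces the unique split. Strict monotonicity in the first forces each child block to be the unique child maximizer.
\end{enumerate}
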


\begin{Example}[Two-level nested Archimedean copulas]\label{ex:two:level}
To construct a $d$-dimensional nested Archimedean copula, write $d=\sum_{s=1}^S d_s$, where $d_s\ge 1$.
For $s\in\{1,\dots,S\}$ with $d_s\ge 2$, let $C_s$ be the $d_s$-dimensional Archimedean copula with generator $\psi_s\in\Psi_\infty\cap\mathrm{RV}_{-\alpha_s}$, $\alpha_s>0$.
For $s\in \{1,\dots,S\}$ with $d_s=1$, write $C_s(u_s)=u_s$.
Denote by $C_0$ the $S$-dimensional Archimedean copula with generator $\psi_0\in\Psi_\infty\cap\mathrm{RV}_{-\alpha_0}$, $\alpha_0>0$.
Then the nested Archimedean copula corresponding to this tree structure is
\begin{align*}
    C(\bm u)=C_0\left(C_1(\bm u_1),\dots,C_S(\bm u_S)\right),
    \quad \bm u=(\bm u_1,\dots,\bm u_S)\in[0,1]^d,
  \end{align*}
  where $\bm u_s\in[0,1]^{d_s}$, $s=1,\dots,S$.
  By Theorem~\ref{thm:mtcm:nested:archimedean:recursive}, we can directly calculate the MTCM $\lambda^\ast(C)$ and its unique maximizer $\bm{b}^\ast=(b_{s,j}^\ast)_{s=1,\dots,S,\,j=1,\dots,d_s}$ as follows:
  \begin{align*}
       \lambda^\ast(C)
    &= d^{-\alpha_0}
    \prod_{s=1}^S d_s^{d_s(\alpha_0-\tilde\alpha_s)/d}\\
    b_{s,j}^\ast
    &=
    d_s^{\tilde\alpha_s-\alpha_0}
    \,\prod_{t=1}^S d_t^{d_t(\alpha_0-\tilde\alpha_t)/d}, \quad s=1,\dots,S,\ \ j=1,\dots,d_s,
  \end{align*}
  where $\tilde \alpha_s=\alpha_0$ if $d_s=1$ and $\tilde \alpha_s=\alpha_s$ if $d_s\ge 2$.
\end{Example}

\begin{Example}[Illustrative examples]
  Consider two-level nested Archimedean copulas as in the setup of Example~\ref{ex:two:level}.
 We take Clayton generators, which are regularly varying with index $-\alpha_s=-1/\theta_s$, $s\in\{0,\dots,S\}$.
  For these generators, the sufficient nesting conditions are $\theta_{0} \le \theta_s$, $s=1,\dots,S$, or equivalently, $\alpha_0\ge \max\{\alpha_1,\dots,\alpha_S\}$~\citep{joe1997multivariate,mcneil2008}.
  \begin{enumerate}[label=(\roman*)]
  \item If $C(\bm{u})=C_0(C_1(u_{1,1}, u_{1,2}),C_2(u_{2,1},u_{2,2},u_{2,3}))$, we obtain
    \begin{align*}
      \lambda^\ast(C)=\frac{5^{-\alpha_0}}{2^{2(\alpha_1-\alpha_0)/5}\cdot 3^{3(\alpha_2-\alpha_0)/5}}
    \end{align*}
    with maximizing $\bm{b}^{\ast}=(b_{1,1}^{\ast},b_{1,2}^{\ast},b_{2,1}^{\ast},b_{2,2}^{\ast},b_{2,3}^{\ast})$ given by
    \begin{align*}
      b_{1,1}^{\ast}=b_{1,2}^{\ast}=\frac{2^{\alpha_1-\alpha_0}}{2^{2(\alpha_1-\alpha_0)/5}3^{3(\alpha_2-\alpha_0)/5}},\quad b_{2,1}^{\ast}=b_{2,2}^{\ast}=b_{2,3}^{\ast}=\frac{3^{\alpha_2-\alpha_0}}{2^{2(\alpha_1-\alpha_0)/5}3^{3(\alpha_2-\alpha_0)/5}}.
    \end{align*}
  \item If $C(\bm{u})=C_0(u_{0,1}, C_1(u_{1,1}, u_{1,2}))$, then
    \begin{align*}
      \lambda^\ast(C)=\frac{3^{-\alpha_0}}{2^{2(\alpha_1-\alpha_0)/3}}
    \end{align*}
    with maximizing $\bm{b}^{\ast}=(b_{0,1}^{\ast},b_{1,1}^{\ast},b_{1,2}^{\ast})$ given by
    \begin{align*}
      b_{0,1}^{\ast}=\frac{1^{\alpha_1-\alpha_0}}{1\cdot 2^{2(\alpha_1-\alpha_0)/3}}=2^{2(\alpha_0-\alpha_1)/3},\quad b_{1,1}^{\ast}=b_{1,2}^{\ast}=\frac{2^{\alpha_1-\alpha_0}}{1\cdot 2^{2(\alpha_1-\alpha_0)/3}}=2^{(\alpha_1-\alpha_0)/3}.
    \end{align*}
    Figure~\ref{fig:MTCM:3d:fully:nested:reg:var:gen} shows the MTCM
    $\lambda^\ast(C)$ (left), maximizing $b_{0,1}^{\ast}$ (center) and $b_{1,1}^{\ast}=b_{1,2}^{\ast}$ (right)
    as functions of $\theta_s=1/\alpha_s$, $s=0,1$.
    \begin{figure}[htbp]
      \centering
      \includegraphics[width=0.32\textwidth]{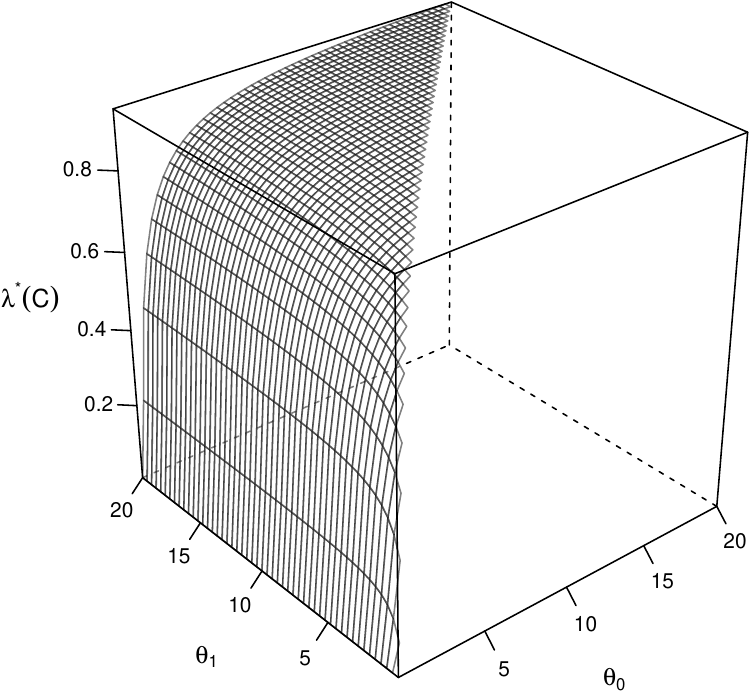}\hfill
      \includegraphics[width=0.32\textwidth]{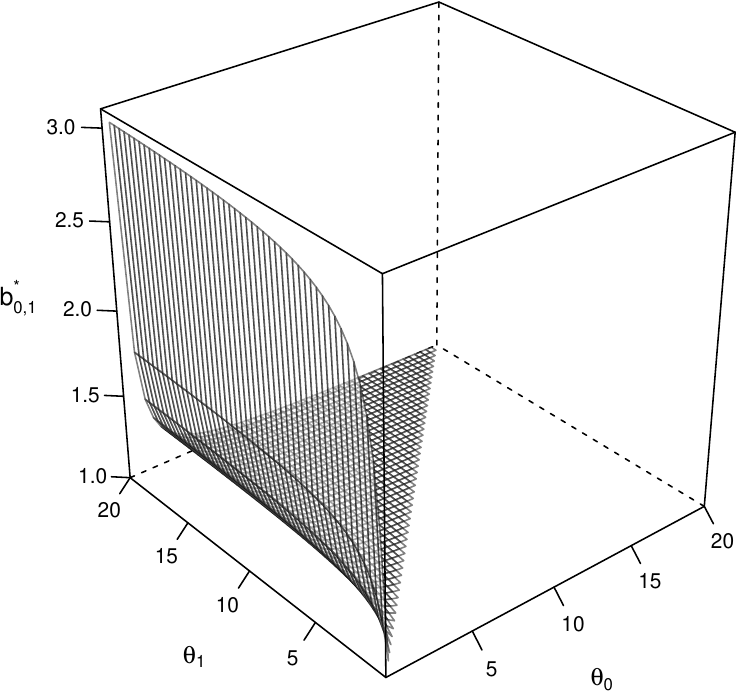}\hfill
      \includegraphics[width=0.32\textwidth]{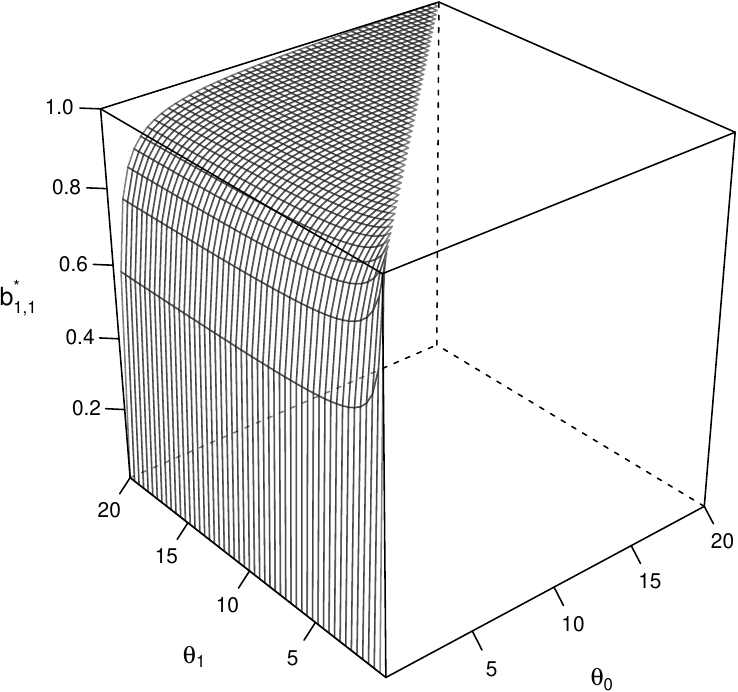}
      \caption{MTCM $\lambda^\ast(C)$ (left) and attaining $b_{0,1}^{\ast}$
        (center) and $b_{1,1}^{\ast}$ (right) as functions of Clayton parameters
        $\theta_0$ and $\theta_1$ (with $0<\theta_0\le\theta_1$), where
        $\theta_0=1/\alpha_0$ and $\theta_1=1/\alpha_1$.  Note that a larger
        $\theta$ indicates larger concordance.}
\label{fig:MTCM:3d:fully:nested:reg:var:gen}
    \end{figure}
  \end{enumerate}
\end{Example}

\section{Application to sea-level models and flood risk}\label{sec:numerical:real:data}
\citet{smith1990statistics} and~\citet{tawn1990modelling} analyze 40 years of
trivariate annual sea-level maxima at the three sites Southend (dimension
$j=1$), Sheerness (dimension $j=2$) and Kings Lynn (dimension $j=3$) on the
south-east coast of England. Trivariate extreme value distributions are fitted
to the data using the maximum likelihood method.
As an
application of MTCMs, this section revisits the models fitted in~\citet{tawn1990modelling}
to assess the risk of simultaneous high sea levels.

Let $\bm{Y}=(Y_1,Y_2,Y_3)\sim H$ be the random vector of annual maxima, whose
marginal distributions $H_1,H_2,H_3$ are generalized extreme value distributions.
Let
$X_j=-\log H_j(Y_j)$, $j=1,2,3$, so that
$\bX=(X_1,X_2,X_3)\sim G$ has standard exponential margins. Note that the upper joint tail of
$\bm{Y}$, modeling jointly large sea levels and thus corresponding to an
increased risk of a simultaneous flooding at the three sites, is transformed to
the lower tail of $\bX$, which is suitable for our analysis as in Section~\ref{sec:copulas:doa}.
The model considered in~\citet{tawn1990modelling} is based on the survival function
$\bar G$ of $G$ given by
\begin{align*}
  \bar G(\bx)=\exp\left(-(x_1+x_2+x_3) B\left(\frac{x_1}{x_1+x_2+x_3},\frac{x_2}{x_1+x_2+x_3}\right)\right),\quad \bx=(x_1,x_2,x_3)\in(0,\infty)^3,
\end{align*}
where $B$ is a function of a specific form on the simplex
$\Delta_2=\{(w_1,w_2)\in[0,\infty)^2:w_1+w_2\le 1\}$;
see~\citet[Equation~(2$\cdot$4)]{tawn1990modelling}.
This corresponds to assuming an EVC on $\bm{Y}$ with the stable tail dependence function
\begin{align*}
  \ell(x_1,x_2,x_3)=(x_1+x_2+x_3) B\left(\frac{x_1}{x_1+x_2+x_3},\frac{x_2}{x_1+x_2+x_3}\right),
\end{align*}
which can be found by comparing $\bar G$
with~\eqref{eq:EVC:form}.

With $w_3=1-w_1-w_2$, \citet{tawn1990modelling} considered the two models
\begin{align*}
  B_{\one}(w_1,w_2)&=(1-\theta_3) w_3 + \left[\,\sum_{j=1}^2 \{
                     (1-\theta_j)w_j
                     \}^r
                     \right]^{1/r}
                     +
                     \left\{\,
                     \sum_{j=1}^3(
                     \theta_j w_j
                     )^s
                     \right\}^{1/s},\\
  B_{\two}(w_1,w_2)&=\phi (
                     (w_1^{rs}+w_2^{rs})^{1/r}
                     +w_3^s
                     )^{1/s}
                     +(1-\phi)\{
                     (w_1^t+w_2^t)^{1/t}+
                     w_3\},
\end{align*}
for different choices of $r,s,t\ge 1$ and $\theta_1,\theta_2,\theta_3,\phi\in[0,1]$. In total,
\citet{tawn1990modelling} considered three models of type~$\one$ (labeled
$\one$-1, $\one$-2, $\one$-3, with $\one$-1 satisfying
$B_{\one}(w_1,w_2)=(w_1^s+w_2^s+(1-w_1-w_2)^s)^{1/s}$, thus inducing a
symmetric Gumbel copula), two models of type~$\two$ (labeled $\two$-1, $\two$-2)
and the independence model. Table~\ref{tab:1:tawn} lists the fitting results for
the five parametric models among these six models.
\begin{table}[ht]
  \centering
  \begin{tabular}{lccc}
    \toprule
    Model & Constraints & Log-likelihood & Parameter estimates \\
    \midrule
    Type $\one$-1  & $\theta_1=\theta_2=\theta_3=1$ & $-95.93$ & $s=1.59$ \\
    Type $\one$-2 & $\theta_1=\theta_2=1$ & $-88.85$ & $(s,\theta_3)=(2.48,0.25)$ \\
    Type $\one$-3 & $\theta_1=\theta_2=\theta$  & $-86.15$ & $(s,r,\theta,\theta_3)=(7.44,2.21,0.23,0.55)$\\[2mm]
    Type $\two$-1 & $\phi=1$ & $-89.26$ & $(s,r)=(1.59,1.27)$ \\
    Type $\two$-2 & -- & $-86.09$ & $(s,r,t,\phi)=(1.69,1.25,7.44,0.74)$ \\
    \bottomrule
  \end{tabular}
  \caption{Models, constraints and parameter estimates
    of~\citet[Section~5]{tawn1990modelling}.}\label{tab:1:tawn}
\end{table}

We now compare the five models listed in Table~\ref{tab:1:tawn} in terms of
their implied MTCM.  For each model, we compute
$\lambda(\hat{C}_\ell)=\Lambda(1,1,1)$ (referred to as TDC even though $d=3$
as this measure quantifies tail dependence along the diagonal in $[0,1]^3$),
the MTCM $\lambda^\ast(\hat{C}_\ell)$ and its maximizer $\bm b^\ast\in\mathcal B$.  To
analyze $\Lambda(\bm{b})$, $\bm{b}\in\mathcal B$, and find
$\lambda^\ast(\hat{C}_\ell)$, we reparametrize $\bm{b}=(b_1,b_2,b_3)$ via
$x_j=\log(b_j)$, $j=1,2,3$, so that the constraint $b_1+b_2+b_3=1$
yields $x_1+x_2+x_3=0$. We then have
\begin{align*}
  \lambda^\ast(\hat C_\ell)
  =
  \sup_{x_1,x_2\in\mathbb{R}}
  \Lambda(e^{x_1},e^{x_2}, e^{-(x_1+x_2)};\hat C_\ell).
\end{align*}
The results are summarized in Table~\ref{tab:results}.
\begin{table}[t]
  \centering
  \begin{tabular}{lccc}
    \hline
    & $\lambda$ & $\lambda^\ast$ & $\bm{b}^\ast$ \\
    \hline
    Type $\one$-1 & 0.356 & 0.356 & $(1.000,1.000,1.000)$ \\
    Type $\one$-2 & 0.233 & 0.372 & $(0.630, 0.630, 2.520)$ \\
    Type $\one$-3 & 0.208 & 0.266 & $(1.337, 1.337, 0.559)$ \\ [2mm]
    Type $\two$-1 & 0.377 & 0.378 & $(0.948, 0.948, 1.113)$ \\
    Type $\two$-2 & 0.306 & 0.307 & $(0.956, 0.956, 1.095)$ \\
    \hline
  \end{tabular}
  \caption{TDC $\lambda(\hat{C}_\ell)$, MTCM $\lambda^\ast(\hat{C}_\ell)$ and
    its maximizer $\bm{b}^\ast$ for the five models given
    in~Table~\ref{tab:1:tawn}.}\label{tab:results} 
\end{table}
We observe different types of behavior among the five models which cannot be
inferred from their log-likelihood values given in Table~\ref{tab:1:tawn}.
Comparing type $\one$ and type $\two$ models, we find that the TDCs of the
estimated type $\two$ models and the symmetric (Gumbel) model $\one$-1 are
approximately attained by the MTCM at $\bm{b}^\ast=(1,1,1)$, but this is not the
case for the model $\one$-2 (under the constraint $\theta_1=\theta_2=1$)
with MTCM attained at $\bm{b}^\ast=(0.630, 0.630, 2.520)$ and model
$\one$-3 (under the constraint $\theta_1=\theta_2=\theta$) with MTCM
attained at $\bm{b}^\ast=(1.337, 1.337, 0.559)$.  Since model $\one$-3 has
much smaller TDC and MTCM than $\one$-2, the latter
model may overestimate the degree of tail dependence quantified by these measures.

From an insurance perspective, the distinction between $\lambda$ and
$\lambda^\ast$ is directly linked to accumulation risk. Coastal flood insurance
payouts are driven by rare, high sea levels, and a single storm event can induce
flood losses at multiple locations. Thus, the dependence structure in the upper
tail of $(Y_1,Y_2,Y_3)$ matters for loss aggregation, reinsurance attachment
probabilities and capital requirements. In particular, a model with
$\bm b^\ast\neq(1,1,1)$ indicates that the strongest tail co-movement may occur
along an off-diagonal stress direction, meaning that simultaneous extremes are
most likely when one site is relatively more extreme than the others. This type
of information is relevant for stress testing and scenario design in flood
catastrophe modeling.

\section{Conclusion}\label{sec:concl}
We introduced a multivariate maximal tail concordance measure (MTCM) for
tail-dependent copulas by maximizing the tail copula over rectangles of unit
volume.  In contrast to the classical tail dependence coefficient (TDC), which
evaluates extremal dependence only along the diagonal, the proposed measure
captures off-diagonal stress directions in the joint tail. The associated maximizer $\bm b^\ast$, when unique,
provides an interpretable description of the direction along which tail
dependence is strongest.  We established several basic properties of the
multivariate MTCM.  We then derived analytical representations for important
model classes, such as copulas in the maximum domain of attraction of some extreme-value copulas,
Marshall--Olkin copulas, Archimax and nested Archimedean copulas with regularly
varying generators. An application to trivariate annual sea-level maxima
illustrated the practical value of the proposed measure. As we saw, models with
similar likelihood values, or even similar TDCs, can imply substantially different values of $\lambda^\ast$ and
markedly different maximizers $\bm b^\ast$. Hence multivariate MTCMs can provide
complementary information for model comparison, stress testing and risk
assessment in settings where joint extremes may be driven by off-diagonal tail
behavior, which makes MTCMs an important tool for multivariate extreme value
analysis.

\section*{Acknowledgements}
Takaaki Koike is supported by the Japan Society for the Promotion of Science
(JSPS) KAKENHI grant numbers JP24K00273 and JP26K21178.

\bibliographystyle{apalike}
\bibliography{biblio.bib}

\appendix

\section*{Appendix}

\section{Proofs}\label{sec:proofs}

This appendix collects all the proofs.  For two functions $f,g:\mathbb{R}\to \mathbb{R}$, we write $f(u)\simeq g(u)$, $u \to u^\ast \in [-\infty,\infty]$, if $\lim_{u \to u^\ast}(f(u)/g(u))=1$.

\subsection{Proposition~\ref{prop:basic:properties:extension}}

\begin{proof}[Proof of Proposition~\ref{prop:basic:properties:extension}]
  For every tail copula $\Lambda:(0,\infty)^d\to[0,\infty)$, we use the bounds
  \begin{align}
    0\le \Lambda(\bx)\le \min(x_1,\dots,x_d), \quad \bx\in(0,\infty)^d,
    \label{eq:tailcopula:min:bound}
  \end{align}
  and
  \begin{align}
    |\Lambda(\bx)-\Lambda(\bm y)|\le \|\bx-\bm y\|_1,
    \quad \bx,\bm y\in(0,\infty)^d.
    \label{eq:tailcopula:lipschitz:bound}
  \end{align}
  Indeed, if $\Lambda$ is the tail copula of a copula $C$, then
  \eqref{eq:tailcopula:min:bound} follows from
  $0\le C(t\bx)\le \min(tx_1,\dots,tx_d)$ for all sufficiently small $t>0$, while
  \eqref{eq:tailcopula:lipschitz:bound} follows from the $1$-Lipschitz property
  $|C(\bm u)-C(\bm v)|\le \|\bm u-\bm v\|_1$,
  $\bm u,\bm v\in[0,1]^d$, of all copulas.
  We also use that tail copulas are componentwise nondecreasing and $1$-homogeneous.

  \begin{enumerate}[label=(\roman*), labelwidth=\widthof{(iii)}]

  \item
    If $\Lambda\equiv 0$, then $\lambda^\ast(C)=0$ and the supremum is attained, for instance at $\bone_d$.
    Assume now that $\Lambda\not\equiv 0$. Then there exists
    $\bx\in(0,\infty)^d$ such that $\Lambda(\bx;C)>0$.
    Set $r=\left(\,\prod_{j=1}^d x_j\right)^{1/d}$ and $\tilde{\bm b}=\bx/r\in\mathcal B$.
    By $1$-homogeneity, we have $\Lambda(\tilde{\bm b};C)=r^{-1}\Lambda(\bx;C)>0$.
    Choose $c\in(0,\Lambda(\tilde{\bm b};C))$ and define
    \begin{align}\label{eq:K:c}
      K_c=\left\{\bm b\in\mathcal B:\min_{1\le j\le d}b_j\ge c\right\},
    \end{align}
    which is closed in $\R^d$.
    Moreover, since $b_j=1/\prod_{i\neq j}b_i\le c^{-(d-1)}$ for every $j\in\{1,\dots,d\}$, we have $K_c\subseteq [c,c^{-(d-1)}]^d$, hence $K_c$ is bounded.
    Therefore, $K_c$ is compact.

    Next we claim that
    \begin{align}
      \sup_{\bm b\in\mathcal B}\Lambda(\bm b;C)
      =
      \sup_{\bm b\in K_c}\Lambda(\bm b;C).
      \label{eq:claim:compact:reduction:new}
    \end{align}
    Indeed, if $\bm b\in\mathcal B\setminus K_c$, then $\min_j b_j<c$. Therefore, by
    \eqref{eq:tailcopula:min:bound}, we have $\Lambda(\bm b;C)\le \min_j b_j<c$, and consequently $\sup_{\bm b\in\mathcal B\setminus K_c}\Lambda(\bm b;C)
    \le c
    <\Lambda(\tilde{\bm b};C) 
    \le \sup_{\bm b\in\mathcal B}\Lambda(\bm b;C)$.
    This implies \eqref{eq:claim:compact:reduction:new}.

    Finally, $\Lambda(\cdot;C)$ is continuous
    by~\eqref{eq:tailcopula:lipschitz:bound}, and $K_c$ is compact. Hence there
    exists $\bm b^\ast\in K_c$ such that
    $\Lambda(\bm b^\ast;C) = \max_{\bm b\in K_c}\Lambda(\bm b;C) = \sup_{\bm
      b\in\mathcal B}\Lambda(\bm b;C)$.  Thus the supremum
    in~\eqref{eq:def:mtcm:multivariate} is attained and
    $\lambda^\ast(C)=\max_{\bm b\in\mathcal B}\Lambda(\bm b;C)$.

  \item If $\Lambda(\cdot;C)=\overline\Lambda(\cdot)$, then
    $\lambda^\ast(C) = \sup_{\bm b\in\mathcal B}\min\{b_1,\dots,b_d\}=1$ since
    $\prod_{j=1}^d b_j=1$ implies $\min\{b_1,\dots,b_d\}\le 1$, and equality is
    attained at $\bm b=\bone_d$.

    Conversely, suppose that $\lambda^\ast(C)=1$. By~\ref{item:attainability:mv},
    there exists $\bm b^\ast\in\mathcal B$ such that $\Lambda(\bm b^\ast;C)=1$.
    Using \eqref{eq:tailcopula:min:bound}, we have $1=\Lambda(\bm b^\ast;C)\le \min\{b_1^\ast,\dots,b_d^\ast\}\le 1$.
    Hence $\min\{b_1^\ast,\dots,b_d^\ast\}=1$. Since $\prod_{j=1}^d b_j^\ast=1$, one must have
    $\bm b^\ast=\bone_d$ and thus $\Lambda(\bone_d;C)=1$.

    Now fix $\bx\in(0,\infty)^d$ and put $m:=\min\{x_1,\dots,x_d\}$. Since
    $m\bone_d\le \bx$ componentwise, monotonicity and $1$-homogeneity imply that
    $\Lambda(\bx;C)\ge \Lambda(m\bone_d;C)=m\,\Lambda(\bone_d;C)=m$.  Together
    with \eqref{eq:tailcopula:min:bound}, this yields
    $m\le \Lambda(\bx;C)\le m$, so $\Lambda(\bx;C)=m=\overline\Lambda(\bx)$,
    $\bx\in(0,\infty)^d$.  Hence $\Lambda(\cdot;C)=\overline\Lambda(\cdot)$.

  \item
    If $\Lambda(\cdot;C)\equiv 0$, then clearly $\lambda^\ast(C)=0$.
    Conversely, suppose that $\lambda^\ast(C)=0$. Then $\Lambda(\bm b;C)=0$, $\bm b\in\mathcal B$.
    Let $\bx\in(0,\infty)^d$ and set $r=\left(\,\prod_{j=1}^d x_j\right)^{1/d}$ and $\bm b=\bx/r\in\mathcal B$.
    By $1$-homogeneity, we have $\Lambda(\bx;C)=\Lambda(r\bm b;C)=r\,\Lambda(\bm b;C)=0$.
    Hence
    $\Lambda(\cdot;C)\equiv 0$ on $(0,\infty)^d$.

  \item
    If $\Lambda_1(\bx;C_1)\le \Lambda_2(\bx;C_2)$ for all
    $\bx\in(0,\infty)^d$, then
    $\lambda^\ast(C_1) = \sup_{\bm b\in\mathcal B}\Lambda_1(\bm b;C_1) \le \sup_{\bm b\in\mathcal B}\Lambda_2(\bm b;C_2)$ $= \lambda^\ast(C_2)$.

  \item
    Let $t\in[0,1]$. Since the set of all copulas is convex, $tC_1+(1-t)C_2$ is again
    a copula. Moreover,
    \begin{align*}
      \Lambda(\bx;tC_1+(1-t)C_2)
      =
      \lim_{u\downarrow 0}\frac{tC_1(u\bx)+(1-t)C_2(u\bx)}{u}
      =
      t\Lambda_1(\bx;C_1)+(1-t)\Lambda_2(\bx;C_2).
    \end{align*}
    Therefore,
    \begin{align*}
      \lambda^\ast(tC_1+(1-t)C_2)
      &=
        \sup_{\bm b\in\mathcal B}
        \{t\Lambda_1(\bm b;C_1)+(1-t)\Lambda_2(\bm b;C_2)\}\\
      &\le
        t\sup_{\bm b\in\mathcal B}\Lambda_1(\bm b;C_1)
        +(1-t)\sup_{\bm b\in\mathcal B}\Lambda_2(\bm b;C_2)\\
      &=
        t\lambda^\ast(C_1)+(1-t)\lambda^\ast(C_2).
    \end{align*}

  \item
    Let $\Lambda_n(\cdot):=\Lambda(\cdot;C_n)$ and
    $\Lambda(\cdot):=\Lambda(\cdot;C)$.
    We first show that the convergence $\Lambda_n\to\Lambda$ is uniform
    on every compact $K\subset[0,\infty)^d$. Fix any $\varepsilon>0$.
    Then there exist
    $\bm z^{(1)},\dots,\bm z^{(m)}\in K$ such that $K\subset \bigcup_{k=1}^m B_1(\bm z^{(k)},\varepsilon/3)$, where $B_1(\bm{z}^{(k)},\varepsilon/3)$ denotes the ball in 1-norm with radius
    $\varepsilon/3$ centered at $\bm{z}^{(k)}$. By pointwise convergence at these
    finitely many points, there exists $n_0\in\mathbb N$ such that
    $|\Lambda_n(\bm z^{(k)})-\Lambda(\bm z^{(k)})|<\varepsilon/3$, $k=1,\dots,m$
    for all $n\ge n_0$.  Fix $n\ge n_0$ and $\bm z\in K$. Choose $k$ such that
    $\|\bm z-\bm z^{(k)}\|_1<\varepsilon/3$. Then
    \begin{align*}
      |\Lambda_n(\bm z)-\Lambda(\bm z)|
      \le
      |\Lambda_n(\bm z)-\Lambda_n(\bm z^{(k)})|
      +|\Lambda_n(\bm z^{(k)})-\Lambda(\bm z^{(k)})|
      +|\Lambda(\bm z^{(k)})-\Lambda(\bm z)|
      <\varepsilon,
    \end{align*}
    where we used \eqref{eq:tailcopula:lipschitz:bound} for the first and third
    terms. Hence $\sup_{\bm z\in K}|\Lambda_n(\bm z)-\Lambda(\bm z)|\to 0$.

    Assume first that $\lambda^\ast(C)>0$. Choose
    $\tilde{\bm b}\in\mathcal B$ such that $\Lambda(\tilde{\bm b})>0$, and fix
    $c\in(0,\Lambda(\tilde{\bm b}))$. Define $K_c$ by~\eqref{eq:K:c}, which is compact as in~\ref{item:attainability:mv}. By
    \eqref{eq:tailcopula:min:bound}, it follows that $\sup_{\bm b\in\mathcal B\setminus K_c}\Lambda_n(\bm b)\le c$ and $\sup_{\bm b\in\mathcal B\setminus K_c}\Lambda(\bm b)\le c$.
    Since $\Lambda_n(\tilde{\bm b})\to \Lambda(\tilde{\bm b})>c$, there exists
    $n_1\in\mathbb N$ such that $\Lambda_n(\tilde{\bm b})>c$ for $n\ge n_1$.
    Hence, for $n\ge n_1$, we have $\lambda^\ast(C_n)=\sup_{\bm b\in K_c}\Lambda_n(\bm b)$ and $
    \lambda^\ast(C)=\sup_{\bm b\in K_c}\Lambda(\bm b)$.
    Since $\Lambda_n\to\Lambda$ uniformly on $K_c$, we have
    \begin{align*}
      |\lambda^\ast(C_n)-\lambda^\ast(C)|
      \le
      \sup_{\bm b\in K_c}|\Lambda_n(\bm b)-\Lambda(\bm b)|
      \to 0,
    \end{align*}
    and thus $\lambda^\ast(C_n)\to\lambda^\ast(C)$.

    Finally, assume $\lambda^\ast(C)=0$.
    Then $\Lambda\equiv 0$ by~\ref{item:zero:mv}. Fix $c\in(0,1)$ and define $K_c$ as in~\eqref{eq:K:c}.
    For
    every $n\in\mathbb N$, since $\Lambda_n(\bm b)\le c$ on $\mathcal B\setminus K_c$
    by~\eqref{eq:tailcopula:min:bound}, we have
    \begin{align*}
      \lambda^\ast(C_n)
      =
      \sup_{\bm b\in\mathcal B}\Lambda_n(\bm b)
      \le
      \max\left\{
      \sup_{\bm b\in K_c}\Lambda_n(\bm b),\,c
      \right\}.
    \end{align*}
    Since $\Lambda_n\to 0$ uniformly on
    $K_c$, we obtain $\sup_{\bm b\in K_c}\Lambda_n(\bm b)\to 0$.  Therefore,
    $\limsup_{n\to\infty}\lambda^\ast(C_n)\le c$.  As $c\in(0,1)$ was arbitrary
    and $\lambda^\ast(C_n)\ge 0$, we conclude that
    $\lambda^\ast(C_n)\to 0=\lambda^\ast(C)$.  \qedhere

  \end{enumerate}
\end{proof}

\subsection{Proposition~\ref{prop:mo:mtcm}}
\begin{proof}[Proof of Proposition~\ref{prop:mo:mtcm}]
  \mbox{}
  \begin{enumerate}[label=(\roman*), labelwidth=\widthof{(iii)}]
  \item
   For each $\emptyset\neq S\subseteq\{1,\dots,d\}$, we have that
    \begin{align*}
      \ell_{\bm\alpha,S}(\bx)
      =
      \lim_{x_j\downarrow 0,\ j\notin S}\ell_{\bm\alpha}(\bx)
      =
      \sum_{j\in S}(1-\alpha_j)x_j+\max_{j\in S}\{\alpha_j x_j\}.
    \end{align*}
    Substituting this into~\eqref{eq:tail:copula:stdf}, we obtain
    \begin{align}
      \Lambda(\bx;\hat C^{\mathrm{MO}}_{\bm\alpha})
      &=
        \sum_{\emptyset\neq S\subseteq\{1,\dots,d\}}
        (-1)^{|S|-1}
        \sum_{j\in S}(1-\alpha_j)x_j
        +
        \sum_{\emptyset\neq S\subseteq\{1,\dots,d\}}
        (-1)^{|S|-1}\max_{j\in S}\{\alpha_j x_j\}.
        \label{eq:mo:tailcopula:split}
    \end{align}
    The first term in~\eqref{eq:mo:tailcopula:split} vanishes since
    \begin{align*}
      \sum_{\emptyset\neq S\subseteq\{1,\dots,d\}}
      (-1)^{|S|-1}\sum_{j\in S}(1-\alpha_j)x_j
      &=
        \sum_{j=1}^d (1-\alpha_j)x_j
        \sum_{S: j\in S}(-1)^{|S|-1},
    \end{align*}
    and $\sum_{S:j\in S}(-1)^{|S|-1}=(1-1)^{d-1}=0$ for each fixed $j$.

    For the second term in~\eqref{eq:mo:tailcopula:split}, put
$y_j=\alpha_j x_j\ge 0$ and let $y_{(1)}\le\cdots\le y_{(d)}$ denote the
order statistics of $y_1,\dots,y_d$.  Grouping subsets $S$ according to the
largest rank contained in $S$, we obtain
\begin{align*}
  \Lambda(\bx;\hat C^{\mathrm{MO}}_{\bm\alpha})=\sum_{\emptyset\neq S\subseteq\{1,\dots,d\}} (-1)^{|S|-1}\max_{j\in S}\{y_j\}
&=\sum_{k=1}^d y_{(k)} \sum_{s=1}^{k}\binom{k-1}{s-1}(-1)^{s-1}. 
\end{align*}
Using that
\begin{align*}
  \sum_{s=1}^{k}\binom{k-1}{s-1}(-1)^{s-1}=\sum_{s=0}^{k-1}\binom{k-1}{s}(-1)^s=(1-1)^{k-1}=\mathbbm{1}_{\{k=1\}},
\end{align*}
we obtain $\Lambda(\bx;\hat C^{\mathrm{MO}}_{\bm\alpha})=y_{(1)}=\min\{y_1,\dots,y_d\}$, which is the desired formula.
  \item
    Next, we study the MTCM
    $\lambda^\ast(\hat C^{\mathrm{MO}}_{\bm\alpha})
    =
    \sup_{\bm b\in\mathcal B}\min_{1\le j\le d}\{\alpha_j b_j\}$.
    For every $\bm b\in\mathcal B$, we have $(\min_{1\le j\le d}\{\alpha_j b_j\})^d\le \prod_{j=1}^d \alpha_j b_j$
    and thus
    \begin{align}
      \min_{1\le j\le d}\{\alpha_j b_j\}
      \le
      \left(\,\prod_{j=1}^d \alpha_j b_j\right)^{1/d}
      =\left(\,\prod_{j=1}^d \alpha_j\prod_{j=1}^d b_j\right)^{1/d}
      =\prod_{j=1}^d \alpha_j^{1/d}.
      \label{eq:mo:agm:bound}
    \end{align}
    Hence $\lambda^\ast(\hat C^{\mathrm{MO}}_{\bm\alpha})
    \le
    \prod_{j=1}^d \alpha_j^{1/d}$.
    On the other hand, let $\bar\alpha=\prod_{j=1}^d \alpha_j^{1/d}$ and $b_j^\ast=\bar\alpha/\alpha_j$, $j=1,\dots,d$.
    Then $ \prod_{j=1}^d b_j^\ast=1$ and thus  $\bm b^\ast\in\mathcal B$.
    Moreover, $\min_{1\le j\le d}\{\alpha_j b_j^\ast\}
    =
    \min_{1\le j\le d}\{\bar\alpha\}
    =
    \bar\alpha$, which implies $\lambda^\ast(\hat C^{\mathrm{MO}}_{\bm\alpha})
    \ge \bar\alpha$.
    Consequently, we have $\lambda^\ast(\hat C^{\mathrm{MO}}_{\bm\alpha})
    = \bar\alpha$.

  \item In~\eqref{eq:mo:agm:bound}, equality holds if and only if $\alpha_1b_1=\cdots=\alpha_db_d$.
    Hence any maximizer $\bm b\in\mathcal B$ must satisfy $b_j=c/\alpha_j$, $j=1,\dots,d$, for some constant $c>0$.  Using the constraint $\prod_{j=1}^d b_j=1$, the constant is necessarily  $c=\prod_{j=1}^d\alpha_j^{1/d}$, which yields the desired formula.\qedhere
  \end{enumerate}
\end{proof}

\subsection{Proposition~\ref{prop:tail:cop:AC:reg:var:gen}}

For the proof of Proposition~\ref{prop:tail:cop:AC:reg:var:gen}, we need the
following notion.  A function $L:(0,\infty)\to (0,\infty)$ is \emph{slowly
  varying} (at $\infty$) if $\lim_{x\to\infty} L(tx)/L(x)=1$ for all $t>0$.
Let $\mathrm{SV}$ denote the class of all slowly varying functions.
According to \citet[Theorem~1.4.1]{binghamgoldieteugels1987}, any function $f \in\mathrm{RV}_\rho$, $\rho \in \R$, can be represented by $f(x)=x^{\rho}L(x)$ for some $L\in \mathrm{SV}$.

\begin{proof}[Proof of Proposition~\ref{prop:tail:cop:AC:reg:var:gen}]
  Let $\psi\in\Psi_\infty\cap \mathrm{RV}_{-\alpha}$ for some $\alpha>0$.  Then
  there exists $L\in \mathrm{SV}$ such that $\psi(t)=t^{-\alpha}L(t)$.  By \citet[Theorem~1.5.12]{binghamgoldieteugels1987}, we can also write $\psi^{-1}(u)=u^{-1/\alpha}\tilde{L}(1/u)$ for some
  $\tilde{L}\in\mathrm{SV}$.

  The two functions $L,\tilde{L}$ are related via
  \begin{align}
    \tilde{L}\left(\frac{1}{u}\right)^{-\alpha}L\left(u^{-1/\alpha}\tilde{L}\left(\frac{1}{u}\right)\right)=1\label{eq:asym:L:and:tilde:L}
  \end{align}
  since, for every $u\in(0,1)$,
  \begin{align*}
    u&=\psi(\psi^{-1}(u))=\psi^{-1}(u)^{-\alpha}L(\psi^{-1}(u))=\left(u^{-1/\alpha}\tilde{L}\left(\frac{1}{u}\right)\right)^{-\alpha}L\left(u^{-1/\alpha}\tilde{L}\left(\frac{1}{u}\right)\right)\\
                      &=u\tilde{L}\left(\frac{1}{u}\right)^{-\alpha}L\left(u^{-1/\alpha}\tilde{L}\left(\frac{1}{u}\right)\right).
  \end{align*}
  Together with $1$-homogeneity of $\ell$, we have
  from~\eqref{eq:asym:L:and:tilde:L} that, for any fixed $\bx \in(0,\infty)^d$ and as $t\downarrow 0$,
  {\allowdisplaybreaks%
  \begin{align*}
    C(t\bx)&=\psi\left(\ell(\psi^{-1}(tx_1),\dots,\psi^{-1}(tx_d))\right)\\
              &= \left\{\ell(\psi^{-1}(tx_1),\dots,\psi^{-1}(tx_d))\right\}^{-\alpha}L\left(\ell(\psi^{-1}(tx_1),\dots,\psi^{-1}(tx_d))\right)\\
              &= \left\{\ell\left((tx_1)^{-1/\alpha}\tilde{L}\left(\frac{1}{tx_1}\right),\dots,(tx_d)^{-1/\alpha}\tilde{L}\left(\frac{1}{tx_d}\right)\right)\right\}^{-\alpha}\\
              &\phantom{{}={}}\cdot L\left(\ell\left( (tx_1)^{-1/\alpha}\tilde{L}\left(\frac{1}{tx_1}\right),\dots,(tx_d)^{-1/\alpha}\tilde{L}\left(\frac{1}{tx_d}\right) \right)\right)\\
           &\simeq \left\{\ell\left((tx_1)^{-1/\alpha}\tilde{L}\left(\frac{1}{t}\right),\dots,(tx_d)^{-1/\alpha}\tilde{L}\left(\frac{1}{t}\right)\right)\right\}^{-\alpha}\\
                &\phantom{{}={}}\cdot L\left(\ell\left( (tx_1)^{-1/\alpha}\tilde{L}\left(\frac{1}{t}\right),\dots,(tx_d)^{-1/\alpha}\tilde{L}\left(\frac{1}{t}\right) \right)\right)\\
              &= \left\{t^{-1/\alpha}\tilde{L}\left(\frac{1}{t}\right)\ell\left(x_1^{-1/\alpha},\dots,x_d^{-1/\alpha}\right)\right\}^{-\alpha} L\left(t^{-1/\alpha}\tilde{L}\left(\frac{1}{t}\right)\ell\left( x_1^{-1/\alpha},\dots,x_d^{-1/\alpha} \right)\right)\\
              &= t \ell\left(x_1^{-1/\alpha},\dots,x_d^{-1/\alpha}\right)^{-\alpha} \tilde{L}\left(\frac{1}{t}\right)^{-\alpha} L\left(t^{-1/\alpha}\tilde{L}\left(\frac{1}{t}\right)\ell\left( x_1^{-1/\alpha},\dots,x_d^{-1/\alpha} \right)\right)\\
              &\simeq t \ell\left(x_1^{-1/\alpha},\dots,x_d^{-1/\alpha}\right)^{-\alpha} \tilde{L}\left(\frac{1}{t}\right)^{-\alpha} L\left(t^{-1/\alpha}\tilde{L}\left(\frac{1}{t}\right)\right)\\
              &= t \ell\left(x_1^{-1/\alpha},\dots,x_d^{-1/\alpha}\right)^{-\alpha}.\qedhere
  \end{align*}}%
\end{proof}

\subsection{Theorem~\ref{thm:mtcm:archimax:minimization}}

\begin{proof}[Proof of Theorem~\ref{thm:mtcm:archimax:minimization}]
  Since $\prod_{j=1}^d b_j = 1$ if and only if $\prod_{j=1}^d b_j^{-1/\alpha}=1$, the map $T_\alpha:\mathcal B\to\mathcal B$ defined by $T_\alpha(\bm b)=(b_1^{-1/\alpha},\dots,b_d^{-1/\alpha})$ is a bijection with inverse $T_\alpha^{-1}(\bm z)=(z_1^{-\alpha},\dots,z_d^{-\alpha})$.
  By Proposition~\ref{prop:basic:properties:extension}, the maximum in the
  definition of $\lambda^\ast(C_{\psi,\ell})$ is attained.
  By Proposition~\ref{prop:tail:cop:AC:reg:var:gen}, we have that
  \begin{align*}
    \lambda^\ast(C_{\psi,\ell})
    = \max_{\bm b\in\mathcal B}\Lambda(\bm b;C_{\psi,\ell})
    = \max_{\bm z\in\mathcal B}\{\ell(\bm z)^{-\alpha}\}
    = \left\{\min_{\bm z\in\mathcal B}\ell(\bm z)\right\}^{-\alpha}.
  \end{align*}
  The correspondence between maximizer and minimizer follows immediately.\qedhere
\end{proof}

\subsection{Corollary~\ref{cor:mtcm:archimax:symmetric}}
\begin{proof}[Proof of Corollary~\ref{cor:mtcm:archimax:symmetric}]
  By Theorem~\ref{thm:mtcm:archimax:minimization}, it suffices to minimize
  $\ell$ on $\mathcal B$. Let $\bm z=(z_1,\dots,z_d)\in\mathcal B$ and set
  $\bar z:=(1/d)\,\sum_{j=1}^d z_j$.
  Let $\mathfrak S_d$ be the set of all permutations of $\{1,\dots,d\}$, and write $\bm z_{\pi}=(z_{\pi(1)},\dots,z_{\pi(d)})$ for $\pi=(\pi(1),\dots,\pi(d))\in \mathfrak S_d$.
  Since $\ell$ is exchangeable and convex, we have
  \begin{align*}
    \ell(\bar z\bone_d)
    = \ell\left(\frac{1}{d!}\sum_{\pi\in\mathfrak S_d} \bm z_\pi\right)
    \le \frac{1}{d!}\sum_{\pi\in\mathfrak S_d} \ell(\bm z_\pi)
    = \ell(\bm z).
  \end{align*}
  Since $\prod_{j=1}^d z_j=1$, the arithmetic--geometric mean inequality gives
  $\bar z\ge 1$, with equality if and only if $z_1=\cdots=z_d=1$. Using
  $1$-homogeneity of $\ell$, we thus obtain that
  \begin{align*}
    \ell(\bm z)
    \ge \ell(\bar z\bone_d)
    = \bar z\,\ell(\bone_d)
    \ge \ell(\bone_d).
  \end{align*}
  Hence $\bone_d$ is a minimizer of $\ell$ on $\mathcal B$, and thus $\lambda^\ast(C_{\psi,\ell})=\ell(\bone_d)^{-\alpha}$.

  To prove uniqueness, let $\bm z\in\mathcal B$ with $\bm z\neq \bone_d$.  Then,
  by equality in the arithmetic--geometric mean inequality, we have
  $\bar z>1$. Hence $ \ell(\bm z) \ge \bar z\,\ell(\bone_d) > \ell(\bone_d)$. Therefore,
  $\bone_d$ is the unique minimizer of $\ell$ on
  $\mathcal B$. By Theorem~\ref{thm:mtcm:archimax:minimization}, the
  maximizer of $\Lambda(\cdot;C_{\psi,\ell})$ on $\mathcal B$ is thus unique and given by
  $\bm b^\ast=T_\alpha^{-1}(\bone_d)=\bone_d$.\qedhere
\end{proof}

\subsection{Proposition~\ref{prop:tail:cop:tree:nested:archimedean}}
\begin{proof}[Proof of Proposition~\ref{prop:tail:cop:tree:nested:archimedean}]
  We proceed by induction on the height of the subtree rooted at $v$.
  If $v \in \mathcal L$, then~\eqref{eq:recursion:tail:copula:tree} is obvious.

  Next, let $v\in\mathcal I$ and assume that the statement holds for every child
  $w\in\mathrm{ch}(v)$.  By the induction hypothesis, we have $C_w(t\bx_{\mathrm{le}(w)}) \simeq t\,\Lambda_w(\bx_{\mathrm{le}(w)})$, $t\downarrow 0$, for every $w\in\mathrm{ch}(v)$.
  Write $\psi_v(t)=t^{-\alpha_v}L_v(t)$ and $\psi_v^{-1}(u)=u^{-1/\alpha_v}\tilde{L}_v(1/u)$ for some $L_v,\tilde{L}_v\in\mathrm{SV}$.
  Then
  \begin{align*}
    \psi_v^{-1}(C_w(t\bx_{\mathrm{le}(w)}))
    \simeq \{t\Lambda_w(\bx_{\mathrm{le}(w)})\}^{-1/\alpha_v}\tilde L_v\left(\frac{1}{t\,\Lambda_w(\bx_{\mathrm{le}(w)})}\right)
    \simeq \{t\Lambda_w(\bx_{\mathrm{le}(w)})\}^{-1/\alpha_v}\tilde L_v\left(\frac{1}{t}\right) 
  \end{align*}
  as $t\downarrow 0$ and thus
  \begin{align*}
    \sum_{w\in\mathrm{ch}(v)}
    \psi_v^{-1}(C_w(t\bx_{\mathrm{le}(w)}))
    \simeq
    t^{-1/\alpha_v}\tilde L_v\left(\frac{1}{t}\right)
    \sum_{w\in\mathrm{ch}(v)}\Lambda_w(\bx_{\mathrm{le}(w)})^{-1/\alpha_v}.
  \end{align*}
  Together with $\tilde{L}_v\left(1/u\right)^{-\alpha_v}L_v\left(u^{-1/\alpha_v}\tilde{L}_v\left(1/u\right)\right)= 1$ as in~\eqref{eq:asym:L:and:tilde:L}, we have that
  \begin{align*}
    C_v(t\bx_{\mathrm{le}(v)})
    &= \psi_v\left(\,
    \sum_{w\in\mathrm{ch}(v)}
    \psi_v^{-1}(C_w(t\bx_{\mathrm{le}(w)}))\right)\\
    &\simeq
    \left(t^{-1/\alpha_v}\tilde{L}_v\left(\frac{1}{t}\right)\sum_{w\in\mathrm{ch}(v)}\Lambda_w(\bx_{\mathrm{le}(w)})^{-1/\alpha_v}\right)^{-\alpha_v}\!\!\!\!L_v\left(t^{-1/\alpha_v}\tilde{L}_v\left(\frac{1}{t}\right)\sum_{w\in\mathrm{ch}(v)}\Lambda_w(\bx_{\mathrm{le}(w)})^{-1/\alpha_v}\right)\\
    &\simeq t\left(\,\sum_{w\in\mathrm{ch}(v)} \Lambda_w(\bx_{\mathrm{le}(w)})^{-1/\alpha_v}\right)^{-\alpha_v}\tilde{L}_v\left(\frac{1}{t}\right)^{-\alpha_v}L_v\left(t^{-1/\alpha_v}\tilde{L}_v\left(\frac{1}{t}\right)\right)\\ 
    &=t\left(\,\sum_{w\in\mathrm{ch}(v)} \Lambda_w(\bx_{\mathrm{le}(w)})^{-1/\alpha_v}\right)^{-\alpha_v}.
  \end{align*}
  Hence~\eqref{eq:recursion:tail:copula:tree} holds for all $v \in \mathcal I$.
\end{proof}

\subsection{Theorem~\ref{thm:mtcm:nested:archimedean:recursive}}

\begin{proof}[Proof of Theorem~\ref{thm:mtcm:nested:archimedean:recursive}]
  \mbox{}
  \begin{enumerate}[label=(\roman*)]
  \item
    For $v\in \mathcal V$ and $p>0$, define
    \begin{align*}
      A_v(p)
      =
      \left\{
      \bm b_{\mathrm{le}(v)}\in(0,\infty)^{d(v)}:
      \prod_{i\in\mathrm{le}(v)}b_i=p
      \right\}
    \end{align*}
    and
    \begin{align*}
      M_v(p)
      =
      \sup_{\bm b_{\mathrm{le}(v)}\in A_v(p)}
      \Lambda_v(\bm b_{\mathrm{le}(v)}).
    \end{align*}
    Thus
    $
    \lambda_v^\ast=M_v(1).
    $
    By induction on the height of $v$, we first prove that $M_v(p)=\lambda_v^\ast\,p^{1/d(v)}$ and the maximum is uniquely attained in $A_v(p)$ for every $p>0$.
    These statements hold if $v\in\mathcal L$ since $d(v)=1$, $A_v(p)=\{p\}$ and
    $\Lambda_v(b_v)=b_v$.

    Now let $v\in\mathcal I$, and assume that the statements hold for every child $w\in\mathrm{ch}(v)$. Fix $p>0$. For
    $x_w>0$, $w\in\mathrm{ch}(v)$, let
    \begin{align*}
      S_{\alpha_v}\left((x_w)_{w\in\mathrm{ch}(v)}\right)
      =
      \left(
      \sum_{w\in\mathrm{ch}(v)}x_w^{-1/\alpha_v}
      \right)^{-\alpha_v}.
    \end{align*}
    Since $\alpha_v>0$, the map $S_{\alpha_v}$ is continuous and strictly
    increasing in each coordinate.
    By Proposition~\ref{prop:tail:cop:tree:nested:archimedean}, for every
    $\bm b_{\mathrm{le}(v)}\in A_v(p)$, we have
    $\Lambda_v(\bm b_{\mathrm{le}(v)})=
    S_{\alpha_v}
    \left(
      (
        \Lambda_w(\bm b_{\mathrm{le}(w)})
      )_{w\in\mathrm{ch}(v)}
    \right).
    $
    For $\bm b_{\mathrm{le}(v)}\in A_v(p)$, let
    \begin{align*}
      p_w
      =
      \prod_{i\in\mathrm{le}(w)} b_i,
      \quad w\in\mathrm{ch}(v).
    \end{align*}
    Then $p_w>0$ 
    and
    $
    \prod_{w\in\mathrm{ch}(v)}p_w=p.
    $
    Conversely, any collection of $p_w>0$, $w\in\mathrm{ch}(v)$, satisfying
    $\prod_w p_w=p$, together with vectors
    $\bm b_{\mathrm{le}(w)}\in A_w(p_w)$, determines an element of $A_v(p)$.
    Hence
    \begin{align}\label{eq:Mv:p:decomposed}
        M_v(p)
        &=
          \sup_{\prod_w p_w=p}\left\{
          \sup_{\bm b_{\mathrm{le}(w)}\in A_w(p_w),\,w\in\mathrm{ch}(v)}
          S_{\alpha_v}
          \left(
          (
          \Lambda_w(\bm b_{\mathrm{le}(w)})
          )_{w\in\mathrm{ch}(v)}
          \right)\right\}.
    \end{align}
    By the induction hypothesis, $M_w(p_w)$ is attained for every child
    $w\in\mathrm{ch}(v)$ and every $p_w>0$. Therefore, using the componentwise monotonicity
    of $S_{\alpha_v}$, the inner supremum in~\eqref{eq:Mv:p:decomposed} equals
    $
    S_{\alpha_v}
    \left(
      (M_w(p_w))_{w\in\mathrm{ch}(v)}
    \right)
    $
    with $M_w(p_w)=\lambda_w^\ast\, p_w^{1/d(w)}$, $w\in\mathrm{ch}(v)$.
    Consequently, we have that
    \begin{align*}
      M_v(p)
      =
      \sup_{\prod_w p_w=p}
      \left\{\,
      \sum_{w\in\mathrm{ch}(v)}
      (\lambda_w^\ast)^{-1/\alpha_v}
      p_w^{-1/(\alpha_v d(w))}
      \right\}^{-\alpha_v}.
    \end{align*}
    Since $\alpha_v>0$, maximizing the last expression is equivalent to the following problem:
    \begin{align*}
    \text{minimize}\quad
    \sum_{w\in\mathrm{ch}(v)}
      (\lambda_w^\ast)^{-1/\alpha_v}
      p_w^{-1/(\alpha_v d(w))} \quad\text{subject to}\quad
      \prod_{w\in\mathrm{ch}(v)}p_w=p.
    \end{align*}
      With $q_w:=\log p_w$, this problem is equivalent to
    \begin{align*}
     \text{minimize}\quad  F_v(\bm q)
      :=
      \sum_{w\in\mathrm{ch}(v)}
      (\lambda_w^\ast)^{-1/\alpha_v}
      \exp\left(-\frac{q_w}{\alpha_v d(w)}\right)\quad\text{subject to}\quad
\sum_{w\in\mathrm{ch}(v)}q_w=\log p.
    \end{align*}
    The objective function $F_v$ is strictly convex since the Hessian matrix of $F_v$ is diagonal with diagonal entries given by
    \begin{align*}
      \frac{\partial^2}{\partial q_w^2} F_v(\bm q)
      =
      \frac{1}{\alpha_v^2 d(w)^2}
      (\lambda_w^\ast)^{-1/\alpha_v}
      \exp\left(-\frac{q_w}{\alpha_v d(w)}\right)
      >0,
      \quad w\in\mathrm{ch}(v).
    \end{align*}
    This function is also coercive on the affine hyperplane
    \begin{align*}
      H_v(p)
      :=
      \left\{
      \bm q\in\mathbb R^{|\mathrm{ch}(v)|}:
      \sum_{w\in\mathrm{ch}(v)}q_w=\log p
      \right\},
    \end{align*}
    that is, $F_v(\bm q )\to \infty$ for $\bm q \in H_v(p)$ with $\|\bm q\|\to \infty$.

    Since $F_v$ is continuous and coercive on the non-empty closed affine
    subspace $H_v(p)$, the restriction $F_v|_{H_v(p)}$ attains its minimum there.
    Moreover, $H_v(p)$ is convex and $F_v|_{H_v(p)}$ is strictly convex;
    hence this minimizer is unique by the standard Weierstrass--coercivity
    existence theorem and the uniqueness of minimizers of strictly convex
    functions; see~\citet[Propositions.~3.1.1 and 3.2.1 in Appendix B]{Bertsekas2009}.
    Consequently, the Lagrange first-order conditions for the affine constraint
    are necessary and sufficient~\citep[Section~4.2.3]{BoydVandenberghe2004}.

    The Lagrangian is
    $L_v(\bm q,\mu)
    =
    F_v(\bm q) + \mu \left(\,\sum_{w\in\mathrm{ch}(v)}q_w-\log p \right)$,
    $\mu\in\mathbb R$.
    At the minimizer, it holds that
    \begin{align*}
      -\frac{1}{\alpha_v d(w)}
      (\lambda_w^\ast)^{-1/\alpha_v}
      \exp\left(-\frac{q_w}{\alpha_v d(w)}\right)
      +\mu
      =
      0,
      \quad w\in\mathrm{ch}(v), 
    \end{align*}
    or, equivalently,
    \begin{align}
      (\lambda_w^\ast)^{-1/\alpha_v}
      \exp\left(-\frac{q_w}{\alpha_v d(w)}\right)
      =
      \alpha_v\mu\,d(w),
      \quad w\in\mathrm{ch}(v).\label{prop:mtcm:nested:archimedean:recursive:nec:cond}
    \end{align}
    Since the left-hand side is positive, we have $\mu>0$.
    Let $F_v^{\min}$ denote the minimum value of $F_v$ on $H_v(p)$. Summing
    the preceding identities over all $w\in\mathrm{ch}(v)$, we obtain
    \begin{align*}
      F_v^{\min}
      =
      \alpha_v\mu
      \sum_{w\in\mathrm{ch}(v)}d(w)
      =
      \alpha_v\mu\,d(v).
    \end{align*}
    Hence
    $
    M_v(p)
    =
    (F_v^{\text{min}})^{-\alpha_v}
    =
    \left\{\alpha_v\mu\,d(v)\right\}^{-\alpha_v}.
    $
    Furthermore, \eqref{prop:mtcm:nested:archimedean:recursive:nec:cond} implies that
    \begin{align*}
      p_w
      =
      \exp(q_w)
      =
      \left\{
      \frac{(\lambda_w^\ast)^{-1/\alpha_v}}
      {\alpha_v\mu\,d(w)}
      \right\}^{\alpha_v d(w)},
      \quad w\in\mathrm{ch}(v).
    \end{align*}
    Using the constraint $\prod_w p_w=p$, we get
    \begin{align*}
      p
      =
      \prod_{w\in\mathrm{ch}(v)}
      \left\{
      \frac{(\lambda_w^\ast)^{-1/\alpha_v}}
      {\alpha_v\mu\,d(w)}
      \right\}^{\alpha_v d(w)},
    \end{align*}
    or, equivalently,
    $
    p
    =
    (\alpha_v\mu)^{-\alpha_v d(v)}
    \prod_{w\in\mathrm{ch}(v)}
    \left\{d(w)^{\alpha_v}\lambda_w^\ast\right\}^{-d(w)}.
    $
    Thus
    \begin{align*}
      (\alpha_v\mu)^{-\alpha_v}
      =
      p^{1/d(v)}
      \prod_{w\in\mathrm{ch}(v)}
      \left\{d(w)^{\alpha_v}\lambda_w^\ast\right\}^{d(w)/d(v)}.
    \end{align*}
    Substituting this into the expression for $M_v(p)$, we obtain
    \begin{align*}
      M_v(p)
      =
      d(v)^{-\alpha_v}
      p^{1/d(v)}
      \prod_{w\in\mathrm{ch}(v)}
      \left\{d(w)^{\alpha_v}\lambda_w^\ast\right\}^{d(w)/d(v)}.
    \end{align*}
    Taking $p=1$ gives the desired formula
    \begin{align*}
      \lambda_v^\ast
      =
      d(v)^{-\alpha_v}
      \prod_{w\in\mathrm{ch}(v)}
      \left\{d(w)^{\alpha_v}\lambda_w^\ast\right\}^{d(w)/d(v)}.
    \end{align*}
    Consequently, we obtain
    $
    M_v(p)=\lambda_v^\ast\, p^{1/d(v)}.
    $

    Moreover, since the minimizer $(q_w)_{w\in\mathrm{ch}(v)}$ is unique, so is $(p_w)_{w\in \mathrm{ch}(v)}$ in the outer supremum in~\eqref{eq:Mv:p:decomposed}.
     From the preceding formulas,
    they are given by
    \begin{align}
      p_w^\ast(p)
      =
      p^{d(w)/d(v)}
      \left\{
      \frac{d(v)^{\alpha_v}\lambda_v^\ast}
      {d(w)^{\alpha_v}\lambda_w^\ast}
      \right\}^{d(w)},
      \quad w\in\mathrm{ch}(v).
      \label{eq:child:products:optimizer}
    \end{align}
   For these optimal products, the induction hypothesis gives a unique
maximizer in each child subtree. Combining these child-subtree maximizers
therefore yields a maximizer in $A_v(p)$.

It remains to prove uniqueness. Let $\bm b_{\mathrm{le}(v)}\in A_v(p)$ be
any maximizer, and set $\tilde p_w=\prod_{i\in\mathrm{le}(w)}b_i$, $w\in\mathrm{ch}(v)$.
By definitions of $M_v$ and $M_w$, we also have
\begin{align*}
  M_v(p)
  =
  S_{\alpha_v}\bigl((\Lambda_w(\bm b_{\mathrm{le}(w)}))_w\bigr)
  \le
  S_{\alpha_v}\bigl((M_w(\tilde p_w))_w\bigr)
  \le
  M_v(p).
\end{align*}
Hence equality holds throughout.
Therefore, the uniqueness of $(p_w^\ast(p))_{w\in\mathrm{ch}(v)}$ yields $\tilde p_w=p_w^\ast(p)$, $w\in\mathrm{ch}(v)$.
Moreover, the strict monotonicity of $S_{\alpha_v}$ implies $\Lambda_w(\bm b_{\mathrm{le}(w)})
  =
  M_w(p_w^\ast(p))$, $w\in\mathrm{ch}(v)$.
By the induction hypothesis, each child-subtree maximizer is unique.
Therefore $\bm b_{\mathrm{le}(v)}$ coincides with the maximizer constructed
above, and $M_v(p)$ is attained by this unique vector in $A_v(p)$.

    This completes the induction and proves the recursion
    \eqref{eq:recursion:lambda:tree}.

  \item If all children of $v$ are leaves, then $d(w)=1$ and
    $\lambda_w^\ast=1$ for every $w\in\mathrm{ch}(v)$. Hence
    \eqref{eq:recursion:lambda:tree} gives
    $
    \lambda_v^\ast=d(v)^{-\alpha_v},
    $
    which agrees with~\eqref{eq:closed:form:lambda:subtree} since $\mathcal I_v\setminus\{v\}=\emptyset$.

    Assume now that~\eqref{eq:closed:form:lambda:subtree} holds for every
    internal child of $v$. By~\eqref{eq:recursion:lambda:tree}, leaf
    children only contribute the factor $1$, hence
    \begin{align*}
      \begin{aligned}
        \lambda_v^\ast
        &=
          d(v)^{-\alpha_v}
          \prod_{w\in\mathrm{ch}(v)\cap\mathcal I}
          \left\{d(w)^{\alpha_v}\lambda_w^\ast\right\}^{d(w)/d(v)}  \\
        &=
          d(v)^{-\alpha_v}
          \prod_{w\in\mathrm{ch}(v)\cap\mathcal I}
          d(w)^{
          (\alpha_v-\alpha_w)d(w)/d(v)
          }
          \prod_{w\in\mathrm{ch}(v)\cap\mathcal I}
         \left\{ \prod_{\tilde{w}\in\mathcal I_w\setminus\{w\}}
          d(\tilde{w})^{
          (\alpha_{\mathrm{pa}(\tilde{w})}-\alpha_{\tilde{w}})d(\tilde{w})/d(v)
          }\right\}.
      \end{aligned}
    \end{align*}
    Since $w\in\mathrm{ch}(v)\cap\mathcal I$ implies
    $\mathrm{pa}(w)=v$, the first product can be written as
    \begin{align*}
      \prod_{w\in\mathrm{ch}(v)\cap\mathcal I}
      d(w)^{(\alpha_{\mathrm{pa}(w)}-\alpha_w)d(w)/d(v)}.
    \end{align*}
    Moreover, the internal vertices in the subtree rooted at $v$, except for $v$
    itself, decompose disjointly as
    \begin{align*}
      \mathcal I_v\setminus\{v\}
      =
      \{\mathrm{ch}(v)\cap\mathcal I\}
      \uplus
      \biguplus_{w\in\mathrm{ch}(v)\cap\mathcal I}
      (\mathcal I_w\setminus\{w\}),
    \end{align*}
    where $\uplus$ denotes disjoint union. 
    Hence
    \begin{align*}
      \lambda_v^\ast
      =
      d(v)^{-\alpha_v}
      \prod_{w\in\mathcal I_v\setminus\{v\}}
      d(w)^{
        (\alpha_{\mathrm{pa}(w)}-\alpha_w)d(w)/d(v)
      },
    \end{align*}
    which proves \eqref{eq:closed:form:lambda:subtree}.

  \item
    For ease of notation, we first show the case when $v=r$.
    Let $j\in\{1,\dots,d\}$ be a leaf, and let $v_0=r,v_1,\dots,v_m=j$ for some $m\in\IN$
    denote the unique path from the root $r$ to $j$. Let
    \begin{align*}
      P_k^\ast
      =
      \prod_{i\in\mathrm{le}(v_k)}b_i^\ast,
      \quad k=0,\dots,m,
    \end{align*}
    where $\bm b^\ast$ is the unique maximizer in $\mathcal B$.
    Then $P_0^\ast=1$, $P_m^\ast=b_j^\ast$,
    $d(v_m)=1$ and $\lambda_{v_m}^\ast=1$. Applying~\eqref{eq:child:products:optimizer} recursively along the path gives
    \begin{align*}
      P_k^\ast
      =
      (P_{k-1}^\ast)^{d(v_k)/d(v_{k-1})}
      \left\{
      \frac{
      d(v_{k-1})^{\alpha_{v_{k-1}}}
      \lambda_{v_{k-1}}^\ast
      }{
      d(v_k)^{\alpha_{v_{k-1}}}
      \lambda_{v_k}^\ast
      }
      \right\}^{d(v_k)},
      \quad k=1,\dots,m,
    \end{align*}
    equivalently,
    \begin{align*}
      (P_k^\ast)^{1/d(v_k)}
      =
      (P_{k-1}^\ast)^{1/d(v_{k-1})}
      \frac{
      d(v_{k-1})^{\alpha_{v_{k-1}}}
      \lambda_{v_{k-1}}^\ast
      }{
      d(v_k)^{\alpha_{v_{k-1}}}
      \lambda_{v_k}^\ast
      }.
    \end{align*}
    Iterating this identity and using $P_0^\ast=1$, we obtain
    \begin{align*}
      \begin{aligned}
        b_j^\ast
        =
        P_m^\ast
        =
        \prod_{k=1}^m
        \frac{
        d(v_{k-1})^{\alpha_{v_{k-1}}}
        \lambda_{v_{k-1}}^\ast
        }{
        d(v_k)^{\alpha_{v_{k-1}}}
        \lambda_{v_k}^\ast
        }
        =
        d^{\alpha_r}\lambda_r^\ast
        \prod_{k=1}^{m-1}
        d(v_k)^{\alpha_{v_k}-\alpha_{v_{k-1}}}.
      \end{aligned}
    \end{align*}
    Since $\lambda_r^\ast=\lambda^\ast(C)$ and
    $
    \{v_1,\dots,v_{m-1}\}
    =
    \mathrm{an}(j),
    $
    this is precisely
    \begin{align*}
      b_j^\ast
      =
      d^{\alpha_r}\lambda^\ast(C)
      \prod_{v\in\mathrm{an}(j)}
      d(v)^{\alpha_v-\alpha_{\mathrm{pa}(v)}}.
    \end{align*}
    The uniqueness of $\bm b^\ast$ follows from the uniqueness already proven
    for $M_r(1)$.

    For a general internal vertex $v$, apply the same argument to the subtree
    rooted at $v$. Let $v_0=v,v_1,\dots,v_m=j$ be the unique path from $v$ to
    $j\in\mathrm{le}(v)$. 
    Then the same telescoping argument as before yields
    \begin{align*}
      (\bm b_v^\ast)_j
      =
      d(v)^{\alpha_v}\lambda_v^\ast
      \prod_{k=1}^{m-1}
      d(v_k)^{\alpha_{v_k}-\alpha_{v_{k-1}}}
      =
      d(v)^{\alpha_v}\lambda_v^\ast
      \prod_{w\in\mathrm{an}_v(j)}
      d(w)^{\alpha_w-\alpha_{\mathrm{pa}(w)}}.
    \end{align*}
    Indeed, the uniqueness established
    in the proof of~\ref{prop:mtcm:nested:archimedean:recursive:i} implies that the
    restriction of the global maximizer to each subtree along the path is the
    unique maximizer of the corresponding problem $M_{v_k}(P_k^\ast)$; hence the
    optimal child-product formula~\eqref{eq:child:products:optimizer} can be
    applied successively along the path. \qedhere
  \end{enumerate}
\end{proof}

\end{document}